\documentclass[12pt]{amsart}
\usepackage{amsmath,amssymb,latexsym,amsthm,amscd,stmaryrd,amsfonts,mathrsfs}
\usepackage[title,titletoc,toc]{appendix}
\usepackage[all]{xy}
\usepackage{soul}

%%%%%%%Page stuff%%%%%%%%%%%%%
\setlength{\hoffset}{0pt}
\setlength{\voffset}{0pt}
\setlength{\topmargin}{0pt}
\setlength{\oddsidemargin}{0in}
\setlength{\evensidemargin}{0in}
\setlength{\textheight}{8.75in}
\setlength{\textwidth}{6.5in}
\pagestyle{headings}
%\renewcommand{\baselinestretch}{1.25}

%%%%%%%%theorem stuff%%%%%%%%%
\theoremstyle{definition}
\newtheorem{Def}[subsubsection]{Definition}%[section]
%[section]
%[section]
%[section]
\theoremstyle{plain}
\newtheorem{prop}[subsubsection]{Proposition}
\newtheorem{thm}[subsubsection]{Theorem}
\newtheorem{lem}[subsubsection]{Lemma}

\newcommand{\mbf}{\mathbf}
\newcommand{\mfk}{\mathfrak}
\newcommand{\mscr}{\mathscr}
\newcommand{\mcal}{\mathcal}
\newcommand{\mbb}{\mathbb}
\newcommand{\mrm}{\mathrm}
\newcommand{\A}{\mathfrak a}

\newcommand{\ro}{\mrm{ro}}
\newcommand{\co}{\mrm{co}}

\newcommand{\wt}{\widetilde}

\newcommand{\U}{\mbf U}

\usepackage{color}
\newcommand{\nc}{\newcommand}
\nc{\redtext}[1]{\textcolor{red}{#1}}
\nc{\bluetext}[1]{\textcolor{blue}{#1}}
\nc{\greentext}[1]{\textcolor{green}{#1}}
\nc{\yl}[1]{\redtext{From yq: #1}}
\nc{\zb}[1]{\redtext{From zb: #1}}

\title[Geometric RTT relation of $U_v(gl_n)^+$]{Geometric RTT realization of $U_v(gl_n)^+$}

\author{Haitao Ma}
\author{Ming Liu}
\author{Zhu-Jun Zheng}
\address{Department of Mathematics\\ South China University of Technology, Wushan Rd, Guangzhou, China 510640}
\email{Zhengzj@scut.edu.cn}

\date{}
\keywords{  }
\subjclass{17B37, 14L35, 20G43}

\begin{document}

\begin{abstract}
In this paper, we give a  BLM¡¡ realization of the positive part of the quantum group of $U_v(gl_n)$ with respect to RTT¡¡ relations.

\end{abstract}

\maketitle

\section{Introduction}

There are two ways to study quantum groups. One is algebraic, the another is geometric.

In algebriaic way, there are  two  methods.  The first was adopted by
Drinfeld \cite{D1,D3} and Jimbo \cite{J} to define the quantum enveloping algebra $U_q(\mathfrak{g})$ as a $q$-deformation of the
enveloping algebra $U(\mathfrak g)$
in terms of the Chevalley generators and Serre relations based on the data coming from the corresponding Cartan matrix.
The second approach to realize the quantum groups was through RTT ~¡¡method. The approach was  from the quantum inverse scattering method developed by the Leningrad school. Sometimes it was more available for us. For example, In \cite{FRT} Faddeev, Reshetikhin and Takhtajan have shown that both the quantum enveloping algebras $U_q(\mathfrak g)$ and the
dual quantum groups for finite classical simple Lie algebras $\mathfrak g$ can be studied in the RTT method using the solutions $R$ of the
Yang-Baxter equation:
\begin{equation}\label{YB Eq}
R_{12}R_{13}R_{23}=R_{23}R_{13}R_{12}.
\end{equation}

In geometric way, there are many results on  the geometric realization of quantum groups with respect to the Chevalley generators. For example, the $\mathbb{C}$-valued $GL_n$ equivalence functions on the flag variety associated to the finite dimensional vector space over $\mathbb{F}_q$ was considered by Beilinson,  Lusztig,  and McPherson.
It  gave realization of Schur algebra of $U_v(gl_n)$, and $U_v(gl_n)$ was the limit algebra of the Schur algebra. Also Du and Gu used the same way give the realization of $U_v(gl(m|n))$\cite{DG14}. Moreover, they also obtained
a new basis containing the standard generators for quantum linear super group and explicit multiplication
formulas between the generators and an arbitrary basis element. In affine case, Fu gave
a BLM realization for $ U_{\mbb Z} (\widehat{gl}_n)$\cite{F12}. By the similar way,  Bao-Wang \cite{BKLW13} and Fan-Li\cite{FL14} gave the several new quantum groups
and the Schur-like duality of type B/C and D.  But there are few results on the geometric RTT  realization. So the questions are if  the the geometric realization of quantum groups with respect to  the RTT relations can be given and if more information about the representation of  quantum group can be given though the geometry RTT realization. In this paper, we use the BLM's way to construct the RTT realization of $U_v(gl_n)^{+}$.

The paper is organized as follows. In section 2,
we recall the basic results on flag varieties and the RTT realization of $U_{v}(gl_n)$.
In section 3, we calculate generator realizations of the Schur algebra,  find a subalgebra of the limit algebra of the Schur algebra and show that it is isomorphic to the positive part of $U_{v}(gl_n)$.

\section{Preliminary}
In this section, let us recall some basic facts on flag varieties appear in \cite{BLM90} and the  RTT  realization of $U_v(gl_n)$.

Let $\mbb F_q$ be a finite field of $q$ elements and of odd characteristic.
For positive integers  $d$ and $n$,
 consider the set $\mscr X$ of $n$-step flags $V=( V_i)_{ 0\leq i\leq n}$
          in $\mbb F_q^d$ such that  $V_0 = 0$, $V_i\subseteq V_{i+1}$.

Let $G=\mrm{GL}(\mbb F_q^d)$,
and $G$ acts naturally on set $\mscr X$.
Let $G$ act diagonally on the product $\mscr X\times \mscr X$.
Set
\begin{equation}
  \mcal A=\mbb Z[v^{\pm1}].
\end{equation}
Let
$
\mcal S_{\mscr X}=\mcal A(\mscr X\times\mscr X)^G
$
be the set of all $\mcal A$-valued $G$-invariant functions on $\mscr X\times \mscr X$.
Clearly, the set $\mcal S_{\mscr X}$ is a  free $\mcal A$-module.
Moreover,  $\mcal S_{\mscr X}$ admits  an associative  $\mcal A$-algebra structure `$*$' under a standard  convolution product as discussed in ~\cite{BLM90}. In particular, when $v$ is specialized to $\sqrt q$, we have
\begin{equation}
  \label{eq30}
  f * g(V, V')=\sum_{V''\in \mscr X}f(V, V'')g(V'',V'), \quad \forall\ V,V'\in \mscr X.
\end{equation}

Let us describe the $G$-orbits on  $\mscr X\times \mscr X$.
We start by introducing the following notations associated to a matrix
$M=(m_{ij})_{1\leq i, j \leq c}$.
\begin{align} \label{ro-co}
\begin{split}
\ro (M) & = \left (\sum_{j=1}^{c}  m_{ij} \right )_{1\leq i\leq c}, \\
\co (M) & = \left (\sum_{i=1}^{c} m_{ij} \right )_{1\leq j \leq c}.
\end{split}
\end{align}
We also write $\ro(M)_i$ and $\co (M)_j$ for the $i$-th and $j$-th component of the row vectors of $\ro (M)$ and $\co(M)$, respectively.
For any pair $(V, V')$ of flags in $\mscr X$, we can assign an $n$ by $n$ matrix whose $(i,j)$-entry equal to
$\dim \frac{V_{i-1}+ V_i\cap V_j'}{V_{i-1} + V_i\cap V_{j-1}'}$. We have the following bijection.

\begin{align}
G \backslash \mscr X\times \mscr X \simeq  \Theta_d,
\end{align}
where
$\Theta_d$ is the set of all matrices $\Theta_d$ in $\mbox{Mat}_{n \times n} (\mbb N)$ such that
$\sum_{i,j}(\Theta_d)_{i,j} = d$

\begin{Def}
The algebra $U_v(gl_n)$ is generated by the elements $t_{ij}$ and $\overline{t_{ij}}$ with $1 \leq i, j \leq n$ subject to the relations

\begin{eqnarray*}
  &&t_{ij} = \overline{t_{ij}} = 0, \ 1 \leq i < j \leq n.\\
 &&v^{ \delta_{ij}} t_{ia}t_{jb} - v^{  \delta_{ab}} t_{jb}t_{ia} = (v - v^{-1})(\delta_{b < a} - \delta_{i < j})t_{ja}t_{ib},\\
  &&v^{\delta_{ij}} \overline{t}_{ia}\overline{t}_{jb} - v^{\delta_{ab}} \overline{t}_{jb}\overline{t}_{ia} = (v- v^{-1})(\delta_{b < a} - \delta_{i < j})\overline{t}_{ja}\overline{t}_{ib},\\
 &&v^{\delta_{ij}} \overline{t}_{ia}t_{jb} - v^{\delta_{ab}} t_{jb}\overline{t}_{ia} = (v - v^{-1})(\delta_{b < a}t_{ja}\overline{t}_{ib} - \delta_{i < j}\overline{t}_{ja}t_{ib}).
\end{eqnarray*}
\end{Def}

\section{BLM Realization Of $U_{v}(gl_n)^{+}$ With Respect To RTT Relations}
\subsection{Calculus of the algebra $\mcal S$}

For simplicity, we shall denote $\mcal S$ instead of $\mcal S_{\mscr X}$.
In this section, we determine  the generators for $\mcal S$ and the associated  multiplication formula.

\begin{lem}
Assume $V_1,~ V_2,~ V_3$ are the vector space over $F_q$, $V_1\overset{1}{\subset} V_2 \subset V_3$,~and $\mathrm{dim}V_1 = n - 1$, $\mathrm{dim}V_2 = n$, $\mathrm{dim}V_3 = m$.  Set $S = \{V_3' | V_3'\overset{1}{\subset} V_3,V_1 \subset V_3',  V_2 \cap V_3' \neq V_2 \}$. Then
$$\sharp S = q^{m - n}.$$
Where $V_1\overset{1}{\subset} V_2$ means $V_1\subset V_2$ and $\mathrm{dim}V_2 - \mathrm{dim}V_1 = 1.$
\end{lem}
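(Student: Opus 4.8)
The plan is to parametrize hyperplanes $V_3' \subset V_3$ containing $V_1$ by the projective space $\mathbb{P}((V_3/V_1)^*)$, and then subtract off the ones violating the condition $V_2 \cap V_3' \neq V_2$. First I would observe that a subspace $V_3' \overset{1}{\subset} V_3$ with $V_1 \subset V_3'$ corresponds exactly to a hyperplane in the quotient $V_3/V_1$, which has dimension $m - (n-1) = m - n + 1$; the number of such hyperplanes is $\frac{q^{m-n+1}-1}{q-1}$. So $S$ is obtained from this set by removing those $V_3'$ with $V_2 \subseteq V_3'$, i.e. $V_2 \cap V_3' = V_2$.

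**Counting the bad locus.** The hyperplanes $V_3'$ (containing $V_1$) that also contain $V_2$ correspond to hyperplanes in $V_3/V_1$ that contain the line $V_2/V_1$ (recall $\dim V_2/V_1 = 1$). Equivalently these are hyperplanes in $V_3/V_2$, which has dimension $m-n$; their number is $\frac{q^{m-n}-1}{q-1}$. Therefore
$$
\sharp S = \frac{q^{m-n+1}-1}{q-1} - \frac{q^{m-n}-1}{q-1} = \frac{q^{m-n+1}-q^{m-n}}{q-1} = q^{m-n}.
$$
This gives the claimed formula directly.

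**What to watch out for.** The only genuine subtlety is making sure the dictionary "hyperplane of $V_3$ through $V_1$" $\leftrightarrow$ "hyperplane of $V_3/V_1$" is applied correctly, and in particular that the condition defining $S$ really is the \emph{complement} of "$V_2 \subseteq V_3'$" within that set: since $\dim V_2/V_1 = 1$, for a hyperplane $V_3'$ of $V_3$ containing $V_1$ we have the dichotomy $V_2 \cap V_3' = V_2$ (when $V_2 \subseteq V_3'$) or $V_2 \cap V_3'$ is a hyperplane of $V_2$, in which case $V_2 \cap V_3' \neq V_2$ because $\dim V_2 = n \geq 1 > 0$. So the condition $V_2 \cap V_3' \neq V_2$ is exactly $V_2 \not\subseteq V_3'$, and the subtraction above is valid. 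I do not anticipate a real obstacle here; the computation is a routine incidence count once the quotient reformulation is in place, and the result is independent of $n$ as the statement asserts.
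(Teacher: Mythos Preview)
Your proof is correct and follows essentially the same route as the paper: both compute $\sharp S$ as the number of hyperplanes of $V_3$ containing $V_1$ minus those containing $V_2$, obtaining $\frac{q^{m-n+1}-1}{q-1} - \frac{q^{m-n}-1}{q-1} = q^{m-n}$. Your additional justification that the condition $V_2 \cap V_3' \neq V_2$ is exactly $V_2 \not\subseteq V_3'$ (via the dichotomy forced by $\dim V_2/V_1 = 1$) makes explicit a step the paper leaves implicit, but the argument is the same.
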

\begin{proof}
$\sharp S = \sharp\{V_{3}' | V_{3}' \overset{1}{\subset} V_3, V_1 \subset V_3'\} - \sharp\{V_{3}' | V_{3}' \overset{1}{\subset} V_3, V_2 \subset V_3'\}$

$
\begin{array}{ccc}
  \sharp S & = & \sharp\{V_{3}' | V_{3}' \overset{1}{\subset} V_3, V_1 \subset V_3'\} - \sharp\{V_{3}' | V_{3}' \overset{1}{\subset} V_3, V_2 \subset V_3'\} \\
   & = & \frac{q^{m - n +1} - 1}{q - 1} - \frac{q^{m - n} - 1}{q - 1}\\
   & = & q^{m - n}
\end{array}
$

\end{proof}

\begin{lem}\label{lem-x}
Assume $V_1, ~V_2,~ V_3$ are the vector space over $F_q$, $V_2\subset V_3$, $V_1\subset V_3$, $V_1 \cap V_2 = 0$, $\mathrm{dim}V_1  = 1$, $\mathrm{dim}V_2 = n$, $\mathrm{dim} V_3 = m$.  Set $S = \{V_3' | V_3'\overset{1}{\subset} V_3,V_1 \cap V_3' = 0,$ $ \mathrm{dim} V_2 \cap V_3'= n - 1 \}$. Then
$$\sharp S = q^{m - 1} - q^{m - n - 1}.$$
\end{lem}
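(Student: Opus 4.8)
The plan is to count the hyperplanes $V_3' \overset{1}{\subset} V_3$ satisfying the two conditions by a standard inclusion–exclusion on the complementary ``bad'' events, working inside the dual space $V_3^*$ where a hyperplane corresponds to a line. First I would rewrite the set $S$: a hyperplane $V_3'$ of $V_3$ is determined by a nonzero linear functional $\varphi \in V_3^*$ up to scalar, with $V_3' = \ker \varphi$; the total number of hyperplanes is $(q^m-1)/(q-1)$. The condition $V_1 \cap V_3' = 0$ means $\varphi$ does not vanish on the line $V_1$, and the condition $\dim(V_2 \cap V_3') = n-1$ means $V_2 \not\subseteq V_3'$, i.e.\ $\varphi$ does not vanish identically on $V_2$.

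Next I would pass to the complement. Let $A$ be the set of hyperplanes with $V_1 \subseteq V_3'$ and $B$ the set of hyperplanes with $V_2 \subseteq V_3'$; then $\sharp S = \sharp\{\text{all hyperplanes}\} - \sharp A - \sharp B + \sharp(A \cap B)$, since $A \cap B$ consists of hyperplanes containing $V_1 + V_2$, which has dimension $n+1$ because $V_1 \cap V_2 = 0$. Counting in $V_3^*$: hyperplanes containing a subspace $W$ correspond to lines in the annihilator $W^\perp \subseteq V_3^*$, of dimension $m - \dim W$. Hence $\sharp A = (q^{m-1}-1)/(q-1)$, $\sharp B = (q^{m-n}-1)/(q-1)$, and $\sharp(A\cap B) = (q^{m-n-1}-1)/(q-1)$. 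Substituting,
\[
\sharp S = \frac{q^m-1}{q-1} - \frac{q^{m-1}-1}{q-1} - \frac{q^{m-n}-1}{q-1} + \frac{q^{m-n-1}-1}{q-1},
\]
and the constant terms cancel, leaving $\frac{1}{q-1}\bigl(q^m - q^{m-1} - q^{m-n} + q^{m-n-1}\bigr) = \frac{(q^{m-1})(q-1) - (q^{m-n-1})(q-1)}{q-1} = q^{m-1} - q^{m-n-1}$, as claimed.

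The only real subtlety is making sure the events $A$ and $B$ are precisely the complements of the two defining conditions of $S$ inside the ambient set of all hyperplanes of $V_3$: that ``$V_1 \cap V_3' \neq 0$'' is equivalent to ``$V_1 \subseteq V_3'$'' uses $\dim V_1 = 1$, and that ``$\dim(V_2 \cap V_3') \neq n-1$'' is equivalent to ``$V_2 \subseteq V_3'$'' uses that $V_3'$ is a hyperplane, so $\dim(V_2 \cap V_3')$ is either $n$ or $n-1$. The hypothesis $V_2 \subseteq V_3$ is needed for the latter dichotomy, and $V_1 \subseteq V_3$ together with $V_1 \cap V_2 = 0$ is needed for the dimension count of $A \cap B$; these should be checked explicitly but present no difficulty. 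An alternative to the dual-space argument is to fix an ordered basis adapted to the flag $0 \subset V_1 \oplus(V_2\cap\text{complement}) \subset \cdots$ and count directly, but the inclusion–exclusion above is cleaner.
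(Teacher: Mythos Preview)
Your proof is correct and takes essentially the same approach as the paper: both compute $\sharp S$ by inclusion--exclusion on the two complementary conditions ``$V_1 \subset V_3'$'' and ``$V_2 \subset V_3'$'', obtaining the identical four-term expression $\frac{q^m-1}{q-1} - \frac{q^{m-1}-1}{q-1} - \frac{q^{m-n}-1}{q-1} + \frac{q^{m-n-1}-1}{q-1}$. Your dual-space framing and your explicit justification that these are exactly the complements of the defining conditions (using $\dim V_1 = 1$ and the hyperplane dichotomy for $V_2$) are details the paper leaves implicit, but the argument is the same.
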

\begin{proof}
$\sharp S = \sharp\{V' | V' \overset{1}{\subset} V_3\} - \sharp\{V' | V' \overset{1}{\subset} V_3, V_1 \subset V'\} - \sharp\{V' | V' \overset{1}{\subset} V_3, V_2 \subset V'\} + \sharp\{V' | V' \overset{1}{\subset} V_3, V_1 \subset V', V_2 \subset V'\}$

$= \frac{q^m - 1}{q - 1} - \frac{q^{m - 1} - 1}{q - 1} - \frac{q^{m - n} - 1}{q - 1} + \frac{q^{m - n - 1} - 1}{q - 1} $

$= q^{m - 1} - q^{m - n - 1}.$

\end{proof}

\begin{lem} \label{lem-y}
Let $A = (a_{ij}) \in \Theta_d$.

(a)Assume $B = (b_{ij}) \in \Theta_d$. There exist $1\leq i_0 < i_1  \leq n$  such that  $B - E_{i_{0},i_{1}}$ is the diagonal matrices, and $\sum_i b_{ij} = \sum_k a_{jk}.$ Then
$$e_B \ast e_A = \sum_{\textbf{(j,p)}}f_{\textbf{(j,p)}}e_{(A + \sum_{l = 1}^{m}(E_{j_{l - 1},p_{l}} - E_{j_l,p_l}))},$$
where $\textbf{(j,p)} = ((j_1,p_1),\cdots,(j_m,p_m))$ satisfied the conditions:
$i_0 = j_0 <j_1 < \cdots <j_m = i_1$, $1 \leq p_m < \cdots <p_1 \leq n$, and for any $1 \leq k \leq m$, $a_{j_k,p_k} \geq 1$. $f_{\textbf{(j,p)}}= f_1f_2 \cdots f_m$, where

$ f_l =\left\{\begin{array}{ll}
\frac{v^{2(1 + \sum{j \geq p_1}a_{j_0,j})} - v^{2\sum_{j > p_1}a_{j_0,j}}} {v^2 - 1}\prod\limits_{k = j_0 + 1}^{k = j_1 - 1}v^{2\sum_{j\geq p_1}a_{k,j}}& \text{if} \ l = 1; \\[.15in]
(v^{2\sum_{j\geq p_l}a_{j_{l - 1},j}} - v^{2(\sum_{j> p_l}a_{j_{l - 1},j}- 1 )}) \prod\limits_{k = j_{l - 1} + 1}^{k = j_l - 1}v^{2\sum_{j\geq p_l}a_{k,j}}& \text{if}\  l>1.
 \end{array}   \right.
$

(b)Assume $C = (c_{ij}) \in \Theta_d$. There exist $n \geq i_0 > i_1  \geq 1$  such that  $C - E_{i_{0},i_{1}}$ is the diagonal matrices, and $\sum_i c_{ij} = \sum_k a_{jk}.$ Then
$$e_C \ast e_A = \sum_{\textbf{(j,p)}}f_{\textbf{(j,p)}}'e_{(A + \sum_{l = 1}^{m}(E_{j_{l - 1},p_{l}} - E_{j_l,p_l}))},$$
where $\textbf{(j,p)} = ((j_1,p_1),\cdots,(j_m,p_m))$ satisfied the conditions:
$i_0 = j_0 >j_1 > \cdots >j_m = i_1$, $1 \leq p_1 < \cdots <p_m \leq n$, and for any $1 \leq k \leq m$, $a_{j_k,p_k} \geq 1$. $f_{\textbf{(j,p)}}'= f_1'f_2' \cdots f_m'$, where

$ f_l' =\left\{\begin{array}{ll}
\frac{v^{2(1 + \sum_{j \leq p_1}a_{j_0,j})} - v^{2\sum_{j < p_1}a_{j_0,j}}} {v^2 - 1}\prod\limits_{k = j_1 + 1}^{k = j_0 - 1}v^{2\sum_{j\leq p_1}a_{k,j}}& \text{if} \ l = 1; \\[.15in]
(v^{2\sum_{j\leq p_l}a_{j_{l - 1},j}} - v^{2(\sum_{j < p_l}a_{j_{l - 1},j}- 1 )}) \prod\limits_{k = j_{l} + 1}^{k = j_{l - 1} - 1}v^{2\sum_{j\leq p_l}a_{k,j}}& \text{if}\  l>1.
 \end{array}   \right.
$

\end{lem}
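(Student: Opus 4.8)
### Proof proposal

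The plan is to prove part (a) directly from the convolution formula \eqref{eq30}, by fixing a representative pair of flags $(V,W)$ in the orbit $e_A$ and counting, for each matrix $D$ obtainable on the right-hand side, the number of flags $U$ such that $(V,U)$ lies in the orbit $e_B$ and $(U,W)$ lies in the orbit $e_D$. Since $B-E_{i_0,i_1}$ is diagonal, the orbit $e_B$ consists of pairs $(V,U)$ where $U$ differs from $V$ only in the single step from index $i_0-1$ to index $i_0$: concretely $U_k=V_k$ for $k<i_0$ and for $k\ge i_1$, while the intermediate pieces $U_{i_0},\dots,U_{i_1-1}$ are obtained by "shifting a line" up from level $i_1$ to level $i_0$. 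So the sum over $U$ is really a sum over the possible ways such a line can be threaded through the flag $W$, and this is exactly what the index set $\textbf{(j,p)}=((j_1,p_1),\dots,(j_m,p_m))$ with $i_0=j_0<j_1<\cdots<j_m=i_1$ and $p_m<\cdots<p_1$ parametrizes: $j_l$ records the levels of $V$ where the shifted line "turns", and $p_l$ records which column-block of $W$ (i.e.\ which subquotient $W_{p_l}/W_{p_l-1}$) the line passes through between turns. The resulting target matrix is $A+\sum_{l=1}^m (E_{j_{l-1},p_l}-E_{j_l,p_l})$, matching the statement.

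The key steps, in order, are: (i) normalize so that $i_0,i_1$ are the indices with $B=\mathrm{diag}+E_{i_0,i_1}$, and observe that membership of $(V,U)$ in $e_B$ forces $U$ to be a flag squeezed between $V$ and a flag that has one extra dimension at levels $i_0,\dots,i_1-1$; reduce the global count to a product of local counts, one for each level $l=1,\dots,m$, by the standard "telescoping" argument for BLM-type multiplication (this is where one invokes that the stabilizers act transitively on the relevant configuration spaces, so only dimensions of the configuration varieties matter); (ii) for the transition at level $l$, identify the set of choices with a set of the form appearing in Lemma~\ref{lem-x} (for $l>1$, where one chooses a hyperplane avoiding a line and meeting a subspace in the expected dimension) and in the first auxiliary lemma / Lemma~\ref{lem-x} (for $l=1$, where the "new" line is genuinely created rather than passed along, which accounts for the $\frac{1}{v^2-1}$-type factor); (iii) translate each cardinality, computed as a polynomial in $q$, into the variable $v$ via $q\mapsto v^2$, and collect the products to obtain $f_l$ as stated; (iv) assemble $f_{\textbf{(j,p)}}=f_1\cdots f_m$ and sum over all admissible $\textbf{(j,p)}$. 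Part (b) is entirely parallel: it is obtained from (a) by the transpose/opposite-order symmetry of the flag geometry (reversing the roles of $<$ and $>$ in both the index conditions and the partial sums over columns), so I would prove (a) in detail and deduce (b) by this symmetry, or by an identical computation with the inequalities reversed.

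The main obstacle I expect is bookkeeping step (ii)–(iii): getting the exponents in $f_l$ exactly right. The factors $v^{2\sum_{j\ge p_l}a_{k,j}}$ for the "passive" levels $k$ strictly between $j_{l-1}$ and $j_l$ come from the dimensions of the spaces in which nothing is chosen but whose presence shifts the ambient dimension of the relevant Grassmannian; pinning down precisely which partial row-sums $\sum_{j\ge p_l}a_{k,j}$ (versus $\sum_{j>p_l}$) enter, and the correct "$+1$" and "$-1$" shifts distinguishing $l=1$ from $l>1$, requires carefully tracking, at each turn $j_l$, how the dimension $\dim(U_{i}+W_j\cap \text{etc.})$ compares with the same quantity for $V$. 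The clean way to manage this is to set up, once and for all, a formula for $\dim(U_k\cap W_j)$ in terms of $A$ and the data $\textbf{(j,p)}$, verify it is consistent with $U$ being a flag, and then read off every local count from Lemma~\ref{lem-x} (and its degenerate first-step version). Everything else—transitivity of stabilizers, the telescoping of the convolution sum, the substitution $q=v^2$—is routine once the dimension bookkeeping is in place.
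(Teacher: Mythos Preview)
Your overall architecture matches the paper's: fix a representative pair, observe that $(V,U)\in\mathcal O_B$ forces $U_k=V_k$ for $k<i_0$ and $k\ge i_1$ while $U_k$ is a hyperplane of $V_k$ for $i_0\le k<i_1$, and then count the admissible $U_k$'s level by level, invoking Lemma~\ref{lem-x} at the ``turns'' $j_l$ (for $l>1$) and a direct Grassmannian count at $l=1$. That is exactly what the paper does.

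However, your setup has the roles of $A$ and the target matrix reversed, and this is not cosmetic. The convolution \eqref{eq30} gives
\[
(e_B*e_A)(V,W)=\#\{U:(V,U)\in\mathcal O_B,\ (U,W)\in\mathcal O_A\},
\]
so the coefficient of $e_{A'}$ in $e_B*e_A$ is obtained by fixing $(V,W)\in\mathcal O_{A'}$ (the \emph{target} orbit) and counting $U$ with $(V,U)\in\mathcal O_B$ and $(U,W)\in\mathcal O_A$. You instead fix $(V,W)\in\mathcal O_A$ and count $U$ with $(V,U)\in\mathcal O_B$ and $(U,W)\in\mathcal O_D$; that quantity is the coefficient of $e_A$ in $e_B*e_D$, not the coefficient of $e_D$ in $e_B*e_A$, and these are not equal in general. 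Concretely, the paper sets $A'=A+\sum_l(E_{j_{l-1},p_l}-E_{j_l,p_l})$, takes $(f,f')\in\mathcal O_{A'}$, and counts $f''$ with $(f,f'')\in\mathcal O_B$ and $(f'',f')\in\mathcal O_A$; this is why the exponents in $f_l$ are expressed in terms of the entries $a_{k,j}$ of $A$ (the constraint on the middle--right pair), not of $A'$. With your swap the bookkeeping in step~(iii) would produce the wrong row--sums.

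Once you correct the fixed pair to lie in $\mathcal O_{A'}$ and impose $(U,W)\in\mathcal O_A$, the rest of your plan goes through and coincides with the paper's argument: the count for $U_{i_0}$ gives the $\frac{v^{2(1+\cdots)}-v^{2(\cdots)}}{v^2-1}$ factor in $f_1$; each intermediate level $i_0<k<j_1$ contributes $v^{2\sum_{j\ge p_1}a_{k,j}}$; and at $k=j_1$ (and each subsequent turn) Lemma~\ref{lem-x} yields the factor $v^{2\sum_{j\ge p_l}a_{j_{l-1},j}}-v^{2(\sum_{j>p_l}a_{j_{l-1},j}-1)}$. Your proposal to deduce (b) from (a) by the order-reversing symmetry is fine; the paper instead repeats the computation with the inequalities reversed, which amounts to the same thing.
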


\begin{proof}
 Assume $A' = A + \sum_{l = 1}^{m}(E_{j_{l - 1},p_{l}} - E_{j_l,p_l})$. Let $f = (V_1 \subset \cdots \subset V_{i_0 - 1} \subset V_{i_0} \subset \cdots \subset V_{i_1 - 1} \subset V_{i_1} \subset \cdots \subset V_n),~ f' = (V_1' \subset \cdots \subset V_{i_0 - 1}' \subset V_{i_0}' \subset \cdots \subset V_{i_1 - 1}' \subset V_{i_1}' \subset \cdots \subset V_n')$ be such that $(f,f') \in \mathcal{O}_{A'}$. Set $V_0 = V_0' = 0.$

 To prove (a), we need to compute how many $f''$  such that $(f, ~f'') \in \mathcal{O}_B$, $(f'',~f') \in \mathcal{O}_A$. Assume $ f'' = (V_1'' \subset \cdots \subset V_{i_0 - 1}'' \subset V_{i_0}'' \subset \cdots \subset V_{i_1 - 1}'' \subset V_{i_1}'' \subset \cdots \subset V_n'').$
 For any $i < i_0$ or $i \geq i_1$, $V_i'' = V_i$.

 First, we need to count how many $V_{i_0}''$ there exist. Consider the set $Z_{i_0}$ of all subspace $U_{i_0}$ of $V$ such that $ V_{i_0 - 1}\subset U_{i_0} \overset{1}{\subset} V_{i_0}$, $V_{i_0 - 1} + V_{i_0} \cap V_{p_1 - 1}' \subset U_{i_0} $, and$ (V_{i_0 - 1} + V_{i_0} \cap V_{p_1}')\cap U_{i_0} \neq V_{i_0 - 1} + V_{i_0} \cap V_{p_1}'$.

 $\sharp Z_{i_0} = \sharp \{U_{i_0} | U_{i_0} \supset V_{i_0 - 1} + V_{i_0} \cap V_{p_1 - 1}' \} - \sharp \{U_{i_0} | U_{i_0} \supset V_{i_0 - 1} + V_{i_0} \cap V_{p_1}' \}$

 $ = \frac{v^{2(1 + \sum_{j \geq p_1}a_{j_0,j})} - v^{2\sum_{j > p_1}a_{j_0,j}}} {v^2 - 1}.$

Second, we fix $U_{i_0}$. Consider the set  $Z_{i_0 + 1}$ of all subspace $U_{i_0 + 1}$ of $V$ such that  $U_{i_0}\subset U_{i_0 + 1} \overset{1}{\subset} V_{i_0 + 1}$, $V_{i_0} \cap U_{i_{0} + 1} = U_{i_0}$(since $B_{i_{0},~i_{0} + 1} = 0$), $U_{i_0} + V_{i_0 + 1} \cap V_{p_1 - 1} \subset U_{i_0 + 1}$. That is, $V_{i_0 + 1} \cap V_{p_1 - 1}' + U_{i_0} \subset U_{i_0 + 1}, V_{i_0} \cap U_{i_{0} + 1} = U_{i_0}.$

$\sharp Z_{i_0 +1} =  \sharp \{U_{i_0 + 1} | U_{i_0 + 1} \supset V_{i_0 + 1} \cap V_{p_1 - 1}' + U_{i_0}  \} - \sharp \{U_{i_0 + 1} | U_{i_0 + 1} \supset V_{i_0} + V_{i_0 + 1} \cap V_{p_1 - 1}' \}$

$ = \frac{v^{2(1 + \sum_{j \geq p_1}a_{j_0 + 1,j})} - v^{2\sum_{j \geq p_1}a_{j_0 + 1,j}}} {v^2 - 1}$

$= v^{2\sum_{j \geq p_1}a_{j_0 + 1,j}}.$

Similarly, as above, for any $i_0 + 1 < k <j_1$,  $\sharp Z_{k} = v^{2\sum_{j\geq p_1}a_{i_k,j}}$. Then we get the coefficient of $f_1$.

Finally, we fix $U_{k},~ i_0 \leq k \leq j_1 -1.$ We need to count how many $V_{j_1}''$  exist. Consider the set $Z_{j_1}$ of all subspace $U_{j_1}$ of $V$ such that  $U_{j_1 - 1}\subset U_{j_1} \overset{1}{\subset} V_{j_1}$, $V_{i_0}\cap U_{j_1} = V_{i_0} \cap U_{j_1 - 1} = U_{i_0}$ (since $b_{j_0,j_1} = 0$), $U_{j_1 - 1} + V_{j-1} \cap V_{P_2 - 1}' \subset U_{j_1}$, $(U_{j_1 - 1} + V_{j-1} \cap V_{P_2}') \cap U_{j_1} \neq U_{j_1 - 1} + V_{j-1} \cap V_{P_2}'.$  By the lemma$\ref{lem-x}$. $\sharp Z_{j_1} = q^{\sum_{j\geq p_2}a_{j_1,j}} - q^{\sum_{j> p_2}a_{j_{1},j} - 1}$.

All other $Z_{k}\ (j_1+1 \leq k \leq i_1 -1)$ can be counted by the similar way as above. Then $(a)$ follows.

To prove (b), we need to compute how many $f''$  such that $(f,~f'') \in \mathcal{O}_B$, $(f'',~f') \in \mathcal{O}_A$. Assume $ f'' = (V_1'' \subset \cdots \subset V_{i_0 - 1}'' \subset V_{i_0}'' \subset \cdots \subset V_{i_1 - 1}'' \subset V_{i_1}'' \subset \cdots \subset V_n'').$
 For any $i \geq i_0$ or $i < i_1$, $V_i'' = V_i$.

 First, we need to count how many $V_{i_0 - 1}''$ exist. Consider the set $Z_{i_0 - 1}$ of all subspace $U_{i_0 - 1}$ of $V$ such that $ V_{i_0 - 1}\overset{1}{\subset} U_{i_0 - 1} \subset V_{i_0}$, $(V_{i_0 - 1} + V_{i_0} \cap V_{p_1 - 1}') \cap U_{i_0 - 1} \neq  U_{i_0 - 1} $, and $ U_{i_0 - 1} \subset V_{i_0 - 1} + V_{i_0} \cap V_{p_1}'$.

$\sharp Z_{i_0 - 1} = \sharp \{U_{i_0 - 1} | U_{i_0 - 1} \subset V_{i_0 - 1} + V_{i_0} \cap V_{p_1 }' \} - \sharp \{U_{i_0} | U_{i_0} \subset V_{i_0 - 1} + V_{i_0} \cap V_{p_1 - 1}' \}$

 $ = \frac{v^{2(1 + \sum_{j \leq p_1}a_{j_0,j})} - v^{2\sum_{j < p_1}a_{j_0,j}}} {v^2 - 1}.$

Second, we fix $U_{i_0 - 1}$. Consider the set  $Z_{i_0 - 2}$ of all subspace $U_{i_0 - 2}$ of $V$ such that  $V_{i_0 - 2}\overset{1}{\subset} U_{i_0 - 2} \subset U_{i_0 - 1}$, $V_{i_0 - 1}  +  U_{i_{0} - 2} = U_{i_0 - 1}$(since $c_{i_{0},i_{0} - 1} = 0$), $ U_{i_0 - 2} \subset V_{i_0 - 2} + U_{i_0 - 1} \cap V_{p_1}' $. That is, $U_{i_0 - 2} \subset V_{i_0 - 2} + U_{i_0 - 1} \cap V_{p_1}',U_{i_0 - 2} \cap( V_{i_0 - 2} + V_{i_0 - 1} \cap V_{p_1}') \neq U_{i_0 - 2}.$

$\sharp Z_{i_0 - 2} =  \sharp \{U_{i_0 - 2} | U_{i_0 - 2} \subset V_{i_0 - 2} + U_{i_0 - 1} \cap V_{p_1}' \} - \sharp \{U_{i_0 - 2} | U_{i_0 - 2} \subset V_{i_0 - 2} + V_{i_0 - 1} \cap V_{p_1}'\}$

$ = \frac{v^{2(1 + \sum_{j \leq p_1}a_{j_0 - 1,j})} - v^{2\sum_{j \leq p_1}a_{j_0 - 1,j}}} {v^2 - 1}$

$= v^{2\sum_{j \leq p_1}a_{j_0 - 1,j}}.$

Similarly, as above, for any $j_1  \leq k <j_0 - 2$,  $\sharp Z_{k} = v^{2\sum_{j\leq p_1}a_{ k + 1,j}}$. Then we get the coefficient of $f_1$

Finally, we fix $U_{k},~ j_1 \leq k \leq j_0 - 1.$ We need to count how many $V_{j_1 - 1}''$  exist. Consider the set $Z_{j_1 - 1}$ of all subspace $U_{j_1 - 1}$ of $V$ such that  $V_{j_1 - 1}\overset{1}{\subset}U_{j_1 - 1}\subset U_{j_1}$, $U_{j_1} = V_{j_1} + U_{j_1 - 1}$, $U_{j_1 - 1} \subset V_{j_1 - 1} + U_{j_1} \cap V_{p_2}'$, $U_{j_1 - 1} \overset{\bullet}{\subset} V_{j_1 - 1} + U_{j_1} \cap V_{p_2 - 1}'$, where $U_{j_1 - 1} \overset{\bullet}{\subset} V_{j_1 - 1} + U_{j_1} \cap V_{p_2 - 1}'$ means $U_{j_1 - 1} \subset V_{j_1 - 1} + U_{j_1} \cap V_{p_2 - 1}'$ and $U_{j_1 - 1} \cap( V_{j_1 - 1} + V_{j_1} \cap V_{p_2 - 1}')\neq U_{j_1 - 1}$.

$\sharp Z_{j_1 - 1} = \frac{q^{(1 + \sum_{k \leq p_2})} - 1}{q - 1} - \frac{q^{\sum_{k < p_2}} - 1}{q - 1} - (\frac{q^{\sum_{k \leq p_2}} - 1}{q - 1} - \frac{q^{(\sum_{k < p_2} - 1)} - 1}{q - 1})$

$= q^{\sum_{k \leq p_2}} - q^{(\sum_{k < p_2} - 1)}.$

All other $Z_{k}\ (j_m  \leq k \leq j_1 - 2)$ can be counted by the similar way as above. Then $(b)$ follows.

\end{proof}

\begin{prop}
Let $A = (a_{ij}) \in \Theta_d$.
Assume $B = (b_{ij}) \in \Theta_d$. There exist $1\leq i_0 < i_1 < \cdots <i_{m - 1} < i_m \leq n$  such that  $B -\sum_{k =1}^{m} E_{i_{k-1},i_{k}}$ is the diagonal matrices, and $\sum_i b_{ij} = \sum_k a_{jk}.$ Then
$$e_B \ast e_A = \sum_{\textbf{(j,p)}} f_{\textbf{(j,p)}} e_{(A + \sum_{1 \leq k \leq m, 1 \leq l \leq r_k}(E_{j_{k,l - 1},p_{k, l}} - E_{j_{k.l},p_{k,l}}))},$$
where $\textbf{(j,p)}$ runs over $((\textbf{j}_{1},~\textbf{p}_{1}),~\cdots,~(\textbf{j}_{m},~\textbf{p}_{m}))$. For any $1 \leq k \leq m$,
$$(\textbf{j}_{k},~\textbf{p}_{k}) = ((j_{k,1},~p_{k_1}),~\cdots,~(j_{k,r_k},~p_{k,r_k}))$$
 satisfied the conditions:
$i_{k - 1} = j_{k, 0} <j_{k, 1} < \cdots <j_{k,r_k} = i_k$, $1 \leq p_{k,r_k} < \cdots <p_{k,1} \leq n$, and for any $1 \leq l \leq r_k$, $a_{j_{k,l},p_{k,l}} \geq 1$.
$f_{\textbf{(j,p)}}  = \prod_{1 \leq k \leq m, 1 \leq l \leq r_k}f_{k,l}.$

$ f_{k,l} =\left\{\begin{array}{ll}
\frac{v^{2(1 + \sum_{j \geq p_{k,1}}a_{j_{k,0},j})} - v^{2\sum_{j > p_{k,1}}a_{j_{k,0},j}}} {v^2 - 1}\prod\limits_{\xi = j_{k,0} + 1}^{\xi = j_{k,1} - 1}v^{2\sum_{j\geq p_{k,1}}a_{\xi,j}}& \text{if} \ l = 1, p_{k - 1,r_{k - 1}} < p_{k, 1}; \\[.15in]
\frac{v^{2(\sum_{j \geq p_{k,1}}a_{j_{k,0},j})} - v^{2(-1 + \sum_{j > p_{k,1}}a_{j_{k,0},j}})} {v^2 - 1}\prod\limits_{\xi = j_{k,0} + 1}^{\xi = j_{k,1} - 1}v^{2\sum_{j\geq p_{k,1}}a_{\xi,j}}& \text{if} \ l = 1, p_{k - 1,r_{k - 1}} > p_{k, 1}; \\[.15in]
\frac{v^{2(\sum_{j \geq p_{k,1}}a_{j_{k,0},j})} - v^{2( \sum_{j > p_{k,1}}a_{j_{k,0},j}})} {v^2 - 1}\prod\limits_{\xi = j_{k,0} + 1}^{\xi = j_{k,1} - 1}v^{2\sum_{j\geq p_{k,1}}a_{\xi,j}}& \text{if} \ l = 1, p_{k - 1,r_{k - 1}} = p_{k, 1}; \\[.15in]
(v^{2\sum_{j\geq p_{k,l}}a_{j_{k,l - 1},j}} - v^{2\sum_{j> p_{k,l}}a_{j_{k,l - 1},j}}) \prod\limits_{\xi = j_{k,l - 1} + 1}^{k = j_{k,l} - 1}v^{2\sum_{j\geq p_{k,l}}a_{\xi,j}}& \text{if}\  l>1.
 \end{array}   \right.
$

\end{prop}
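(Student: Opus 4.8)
The plan is to generalize the geometric count in the proof of Lemma~\ref{lem-y} from a single off-diagonal link to the chain of $m$ links encoded by $B$. Fix one of the matrices $A' = A + \sum_{1\le k\le m,\,1\le l\le r_k}(E_{j_{k,l-1},p_{k,l}}-E_{j_{k,l},p_{k,l}})$ occurring on the right-hand side, choose a representative $(f,f')\in\mathcal{O}_{A'}$ with $V_0=V_0'=0$, and reduce the statement, via \eqref{eq30}, to counting the flags $f''=(V_i'')$ with $(f,f'')\in\mathcal{O}_B$ and $(f'',f')\in\mathcal{O}_A$. Since $B$ differs from a diagonal matrix only in the chain entries $b_{i_{k-1},i_k}=1$ and $i_0<i_1<\dots<i_m$, comparing row and column sums shows $V_i''=V_i$ for $i<i_0$ and for $i\ge i_m$, while $V_i''$ is a hyperplane of $V_i$ for $i_0\le i\le i_m-1$; the vanishing of the remaining off-diagonal entries of $B$ then determines $V_\xi''$ level by level through a containment together with a non-containment or an intersection condition of the shape treated in the first auxiliary lemma and in Lemma~\ref{lem-x}. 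Hence $\sharp\{f''\}$ is a product of local counts over the levels $i_0,i_0+1,\dots,i_m-1$.

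I would then evaluate this product one block at a time, the $k$-th block covering $i_{k-1}=j_{k,0}<j_{k,1}<\dots<j_{k,r_k}=i_k$. Inside a block the analysis is the one carried out in the proof of Lemma~\ref{lem-y}(a): since the columns strictly decrease along $p_{k,1}>\dots>p_{k,r_k}$, the local count at each interior path vertex $j_{k,l}$ is the $v^2$-integer-type expression supplied by Lemma~\ref{lem-x} (this is where $a_{j_{k,l},p_{k,l}}\ge1$ enters), and the count at each non-vertex level is the pure power of $v$ given by the first auxiliary lemma; multiplying these over the block produces the factors $f_{k,l}$ with $l>1$. The hypothesis $\co(B)=\ro(A)$ guarantees that $f''$ has the correct dimension vector, and one checks that the admissible tuples $\textbf{(j,p)}$ are exactly those for which $e_{A'}$ occurs with nonzero coefficient in $e_B\ast e_A$.

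The one genuinely new ingredient --- and the step I expect to be the crux --- is the local count at the junction level $i_{k-1}=j_{k,0}$ for $k\ge2$. There the flag $f'$ carries, in its row $i_{k-1}$, both the correction $-E_{i_{k-1},p_{k-1,r_{k-1}}}$ inherited from block $k-1$ and the correction $+E_{i_{k-1},p_{k,1}}$ coming from block $k$, so the codimensions in $V_{i_{k-1}}$ of the two subspaces $V_{i_{k-1}-1}+V_{i_{k-1}}\cap V_{p_{k,1}-1}'$ and $V_{i_{k-1}-1}+V_{i_{k-1}}\cap V_{p_{k,1}}'$ --- which control the count --- depend on where $p_{k-1,r_{k-1}}$ sits relative to $p_{k,1}$. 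A short dimension bookkeeping shows that these two codimensions differ from their values in the single-link situation by $(0,0)$, $(-1,-1)$, or $(-1,0)$ according as $p_{k-1,r_{k-1}}<p_{k,1}$, $>p_{k,1}$, or $=p_{k,1}$, which reproduces exactly the three branches of the formula for $f_{k,1}$; for $k=1$ there is no preceding block, only the first branch occurs, and it recovers the $l=1$ term of Lemma~\ref{lem-y}. Multiplying the block contributions with these junction factors, and noting that the top level $i_m$ contributes nothing since $V_{i_m}''=V_{i_m}$, yields $\sharp\{f''\}=\prod_{k,l}f_{k,l}$; summing over all admissible $\textbf{(j,p)}$ then gives the asserted multiplication formula.
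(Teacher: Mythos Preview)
Your proposal is correct and follows essentially the same route as the paper's proof: reduce via the convolution to counting the intermediate flags $f''$, observe that $V_i''=V_i$ outside $[i_0,i_m)$ and is a hyperplane of $V_i$ inside, factor the count level by level, treat each block $[i_{k-1},i_k]$ exactly as in Lemma~\ref{lem-y}(a), and isolate the junction level $j_{k,0}=i_{k-1}$ for $k\ge 2$ as the only new phenomenon, splitting into the three cases $p_{k-1,r_{k-1}}\lessgtr p_{k,1}$ and $p_{k-1,r_{k-1}}=p_{k,1}$. Your explanation of \emph{why} those three cases arise --- namely that the row $i_{k-1}$ of $A'$ carries both the $-E_{i_{k-1},p_{k-1,r_{k-1}}}$ and $+E_{i_{k-1},p_{k,1}}$ corrections, shifting the two relevant codimensions by $(0,0)$, $(-1,-1)$, or $(-1,0)$ --- is in fact more transparent than the paper's case-by-case computation, but the argument is the same.
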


\begin{proof}
Assume $A' = A + \sum\limits_{\substack{1 \leq k \leq m \\ 1 \leq l \leq r_k}}(E_{j_{k,l - 1},p_{k, l}} - E_{j_{k.l},p_{k,l}})$. Let $f = (V_1 \subset \cdots \subset V_{i_0 - 1} \subset V_{i_0} \subset \cdots \subset V_{i_m - 1} \subset V_{i_m} \subset \cdots \subset V_n), ~f' = (V_1' \subset \cdots \subset V_{i_0 - 1}' \subset V_{i_0}' \subset \cdots \subset V_{i_m - 1}' \subset V_{i_m}' \subset \cdots \subset V_n')$ be such that $(f,f') \in \mathcal{O}_{A'}$. Set $V_0 = V_0' = 0.$  We need to compute how many $f''$ we have such that $(f,~f'') \in \mathcal{O}_B$, $(f'',~f') \in \mathcal{O}_A$.  Assume $ f'' = (V_1'' \subset \cdots \subset V_{i_0 - 1}'' \subset V_{i_0}'' \subset \cdots \subset V_{i_m - 1}'' \subset V_{i_m}'' \subset \cdots \subset V_n'')$.
 For any $i < i_0$ or $i \geq i_m$, $V_i'' = V_i$.
  The proof of this proposition is almost similar to the lemma \ref{lem-y}. The only difference is that when $k > 1,~l = 1 $, how many $V_{j_{k,0}}''$ exist.
 We need to count it in the following three case.

First case.  $ p_{k - 1,r_{k - 1}} < p_{k, 1}$.
Consider the set $Z_{j_{k,0}}$ of all subspace $U_{j_{k,0}}$ of $V$ such that $ V_{j_{k,0} - 1}\subset U_{j_{k,0}} \overset{1}{\subset} V_{j_{k,0}}$, $U_{j_{k,0} - 1} + V_{j_{k,0}} \cap V_{p_{k,1} - 1}' \subset U_{j_{k,0}} $, and $ (U_{j_{k,0} - 1} + V_{j_{k,0}} \cap V_{p_{k,1}}')\cap U_{j_{k,0}} \neq U_{j_{k,0} - 1} + V_{j_{k,0}} \cap V_{p_{k,1}}'$.

 $\sharp Z_{j_{k,0}} = \sharp \{U_{j_{k,0}} | U_{j_{k,0}} \supset V_{j_{k,0} - 1} + V_{j_{k,0}} \cap V_{p_{k,1} - 1}' \} - \sharp \{U_{j_{k,0}} | U_{j_{k,0}} \supset V_{j_{k,0} - 1} + V_{j_{k,0}} \cap V_{p_{k,1}}' \}$

 $ = \frac{v^{2(1 + \sum_{j \geq p_{k,1}}a_{j_{k,0},j})} - v^{2\sum_{j > p_{k,1}}a_{j_{k,0},j}}} {v^2 - 1}.$

 Second case. $ p_{k - 1,r_{k - 1}} = p_{k, 1}$.
 Consider the set $Z_{j_{k,0}}'$ of all subspace $U_{j_{k,0}}'$ of $V$ such that$ V_{j_{k,0} - 1}\subset U_{j_{k,0}}' \overset{1}{\subset} V_{j_{k,0}}$, $U_{j_{k,0} - 1} + V_{j_{k,0}} \cap V_{p_{k,1} - 1}' \subset U_{j_{k,0}}' $, and $ (U_{j_{k,0} - 1} + V_{j_{k,0}} \cap V_{p_{k,1}}')\cap U_{j_{k,0}}' \neq U_{j_{k,0} - 1} + V_{j_{k,0}} \cap V_{p_{k,1}}'$.

 $\sharp Z_{j_{k,0}} = \sharp \{U_{j_{k,0}}' | U_{j_{k,0}}' \supset V_{j_{k,0} - 1} + V_{j_{k,0}} \cap V_{p_{k,1} - 1}' \} - \sharp \{U_{j_{k,0}}' | U_{j_{k,0}}' \supset V_{j_{k,0} - 1} + V_{j_{k,0}} \cap V_{p_{k,1}}' \}$

 $ = \frac{v^{2(\sum_{j \geq p_{k,1}}a_{j_{k,0},j})} - v^{2\sum_{j > p_{k,1}}a_{j_{k,0},j}}} {v^2 - 1}.$

Third case.  $ p_{k - 1,r_{k - 1}} > p_{k, 1}$.
Consider the set $Z_{j_{k,0}}''$ of all subspace $U_{j_{k,0}}''$ of $V$ such that$ V_{j_{k,0} - 1}\subset U_{j_{k,0}}'' \overset{1}{\subset} V_{j_{k,0}}$, $U_{j_{k,0} - 1} + V_{j_{k,0}} \cap V_{p_{k,1} - 1}' \subset U_{j_{k,0}}'' $, and $ (U_{j_{k,0} - 1} + V_{j_{k,0}} \cap V_{p_{k,1}}')\cap U_{j_{k,0}}'' \neq U_{j_{k,0} - 1} + V_{j_{k,0}} \cap V_{p_{k,1}}'$.

 $\sharp Z_{j_{k,0}}'' = \sharp \{U_{j_{k,0}}'' | U_{j_{k,0}}'' \supset V_{j_{k,0} - 1} + V_{j_{k,0}} \cap V_{p_{k,1} - 1}' \} - \sharp \{U_{j_{k,0}}'' | U_{j_{k,0}}'' \supset V_{j_{k,0} - 1} + V_{j_{k,0}} \cap V_{p_{k,1}}' \}$

 $ = \frac{v^{2(\sum_{j \geq p_{k,1}}a_{j_{k,0},j})} - v^{2\sum_{j > p_{k,1}}(-1 + a_{j_{k,0},j}})} {v^2 - 1}.$

 Then the proposition follows.
\end{proof}

We assume that the ground field is an algebraic closure $\overline{\mbb F}_q$ of $\mbb F_q$ when we talk about the dimension of a $G$-orbit or its stabilizer.
Set
\[
d(A)={\rm dim} \ \mcal O_{A}
\quad\mbox{and}\quad
r(A)={\rm dim} \ \mcal O_{B},  \quad \forall A\in \Theta_d ,
\]
where $B=(b_{ij})$ is the diagonal matrix  such that $b_{ii}=\sum_ka_{ik}$.
Denote by ${\rm C}_{G}(V, V')$ the stabilizer of $(V, V')$ in $G$.
From \cite{BLM90}, we have known the following fact.
 If $ A \in \Theta_d$, we have
\begin{equation*}
\begin{split}
{\rm dim}\ {\rm C}_{G}(V, V')
& = \sum_{i\geq k,  j\geq l}a_{ij}a_{kl},
\quad  {\rm if}\ (V, V') \in \mcal O_{A}, \\
{\rm dim}\ \mcal O_{A}
&=\sum_{i<k\ {\rm or}\ j<l}a_{ij}a_{kl}, \\
d(A)-r(A)
&= \sum_{i\geq k,  j<l}a_{ij}a_{kl}.
\end{split}
\end{equation*}
For any $A\in \Theta_{d}, $  let
$[A]=v^{-(d(A)-r(A))}e_{A}$, where $e_{A}$ stand for the characteristic function of $G$-obits associated to $A$.

Define
$$\overline{t}_{ji} = 0, \ \text{if} \ 1 \leq i < j \leq n,$$
$$
\begin{array}{ccc}
  \overline{t}_{ij} & = &  -(v^{-1} - v)\sum\limits_{\substack{ \lambda =(\lambda_1, \lambda_2,\cdots, \lambda_n)\\ s.t \sum_{k =1}^n \lambda_{k}= d - 1}}v^{\lambda_i}[D_{\lambda} + E_{ij}]  \ \text{if} \  1\leq i < j \leq n,\\ [.4 in]
  %t_{ji} & = & (v^{-1} - v)\sum_{ \lambda =(\lambda_1, \lambda_2,\cdots, \lambda_n) s.t \sum_{k =1}^n \lambda_{k}= d - 1}v^{ - \lambda_i}[D_{\lambda} + E_{ji}] \ (1 \leq i < j \leq n)\\
  \overline{t}_{ii} & = & \sum\limits_{\substack{ \lambda =(\lambda_1, \lambda_2,\cdots, \lambda_n) \\ s.t \sum_{k =1}^n \lambda_{k}= d}}v^{\lambda_i}[D_{\lambda}] \ \text{if} \ 1 \leq i \leq n.\\
  %t_{ii} & = &  \sum_{ \lambda =(\lambda_1, \lambda_2,\cdots, \lambda_n) s.t \sum_{k =1}^n \lambda_{k}= d}v^{ - \lambda_i}[D_{\lambda}] \ (1 \leq i \leq n)
\end{array}
$$
\begin{prop}
The functions $t_{ij},~\overline{t}_{ij}$ in $\mcal S$, for any $i,~j\in [1,~n]$,  satisfy the following relations.
\begin{eqnarray*}
 % (R1)&&v^{ - \delta_{ij}} t_{ia}t_{jb} - v^{ - \delta_{ab}} t_{jb}t_{ia} = (v^{ - 1} - v)(\delta_{b < a} - \delta_{i < j})t_{ja}t_{ib},\\
  (R1)&&v^{-\delta_{ij}} \overline{t}_{ia}\overline{t}_{jb} - v^{-\delta_{ab}} \overline{t}_{jb}\overline{t}_{ia} = (v^{-1}- v)(\delta_{b < a} - \delta_{i < j})\overline{t}_{ja}\overline{t}_{ib},\\
 % (R3)&&v^{\delta_{ij}} \overline{t}_{ia}t_{jb} - v^{\delta_{ab}} t_{jb}\overline{t}_{ia} = (v - v^{-1})(\delta_{b < a}t_{ja}\overline{t}_{ib} - \delta_{i < j}\overline{t}_{ja}t_{ib}),\\
  (R2)&&\prod_{i=1}^n \overline{t}_{ii}=v^{d},\\
  (R3)&&  \prod_{l=0}^d(\overline{t}_{ii} - v^{l})=0, ~\forall ~i \in [1,~n].\\
  (R4)&&  {\overline{t}_{ij}}^{d+1} = 0,~\rm{if} \  i \neq j.
\end{eqnarray*}
\end{prop}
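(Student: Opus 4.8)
The entire argument is carried out in the basis $\{[A]:A\in\Theta_d\}$, and the starting point is the projection behaviour of the diagonal matrices $D_\lambda$. Write $\mscr X_\beta\subset\mscr X$ for the set of flags of type $\beta$, so that $\mcal O_A\subset\mscr X_{\ro(A)}\times\mscr X_{\co(A)}$. Since $d(D_\lambda)=r(D_\lambda)$ we have $[D_\lambda]=e_{D_\lambda}$, and $e_{D_\lambda}$ is the characteristic function of the diagonal orbit in $\mscr X_\lambda\times\mscr X_\lambda$; reading off \eqref{eq30} gives at once $[D_\lambda]*[D_\mu]=\delta_{\lambda\mu}[D_\lambda]$, $\sum_{\lambda_1+\dots+\lambda_n=d}[D_\lambda]=1_{\mcal S}$, and, for every $B$, $[D_\lambda]*[B]=\delta_{\lambda,\ro(B)}[B]$ and $[B]*[D_\mu]=\delta_{\mu,\co(B)}[B]$. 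From this, (R2) and (R3) are immediate: expanding $\prod_{i=1}^n\overline t_{ii}=\prod_{i=1}^n\bigl(\sum_\lambda v^{\lambda_i}[D_\lambda]\bigr)$ and using orthogonality of the $[D_\lambda]$ collapses the product to $\sum_\lambda v^{\lambda_1+\dots+\lambda_n}[D_\lambda]=v^d\sum_\lambda[D_\lambda]=v^d$; and $\prod_{l=0}^d(\overline t_{ii}-v^l\cdot 1_{\mcal S})=\sum_\lambda\bigl(\prod_{l=0}^d(v^{\lambda_i}-v^l)\bigr)[D_\lambda]$, which vanishes because for each $\lambda$ with $\lambda_1+\dots+\lambda_n=d$ the integer $\lambda_i$ lies in $\{0,1,\dots,d\}$, so one of the factors is zero.

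For (R4) we may assume $i<j$, since $\overline t_{ij}=0$ when $i>j$. Every matrix occurring in $\overline t_{ij}$ has the form $D_\lambda+E_{ij}$ with $\lambda_1+\dots+\lambda_n=d-1$, and satisfies $\ro(D_\lambda+E_{ij})=\lambda+e_i$, $\co(D_\lambda+E_{ij})=\lambda+e_j$, where $e_i$ denotes the $i$-th coordinate vector. A convolution $e_B*e_A$ is the zero function whenever $\co(B)\neq\ro(A)$, since $e_B(V,V'')$ and $e_A(V'',V')$ cannot both be nonzero unless $V''$ lies in both $\mscr X_{\co(B)}$ and $\mscr X_{\ro(A)}$. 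Hence in any monomial $[D_{\lambda^{(1)}}+E_{ij}]*\cdots*[D_{\lambda^{(k)}}+E_{ij}]$ that is not identically zero one must have $\lambda^{(s)}+e_j=\lambda^{(s+1)}+e_i$ for $1\le s<k$, so $\lambda^{(k)}_i=\lambda^{(1)}_i-(k-1)$. Since $0\le\lambda^{(1)}_i\le\lambda^{(1)}_1+\dots+\lambda^{(1)}_n=d-1$, this is impossible once $k=d+1$, so every monomial in $\overline t_{ij}^{\,d+1}$ vanishes, which gives (R4).

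Relation (R1) is the substantive one. The plan is to expand each of the three products $\overline t_{ia}\overline t_{jb}$, $\overline t_{jb}\overline t_{ia}$, $\overline t_{ja}\overline t_{ib}$ in the basis $\{[A']\}$ and then compare coefficients term by term, organizing the verification by the relative order of the indices $i,a,j,b$. Using $\overline t_{ab}=0$ for $a>b$ together with the projection identities above, all cases in which a factor on the left is diagonal ($i=a$ or $j=b$), or in which some factor is zero, reduce to short computations that use only $[D_\lambda]*[B]=\delta_{\lambda,\ro(B)}[B]$. The genuinely nontrivial case is $i<a$ and $j<b$, where $\overline t_{ia}=-(v^{-1}-v)\sum_\lambda v^{\lambda_i}[D_\lambda+E_{ia}]$ and $\overline t_{jb}=-(v^{-1}-v)\sum_\mu v^{\mu_j}[D_\mu+E_{jb}]$. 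There one computes $[D_\lambda+E_{ia}]*[D_\mu+E_{jb}]$ by applying Lemma~\ref{lem-y}(a) to $B=D_\lambda+E_{ia}$, $A=D_\mu+E_{jb}$ (compatibility forcing $\mu=\lambda+e_a-e_j$), and similarly for the other two products with the factors exchanged. The key simplification is that the admissibility condition ``$a_{j_k,p_k}\ge1$'' on the almost-diagonal matrix $D_\mu+E_{jb}$ allows only chains each of whose steps either runs along a diagonal entry or passes through the single off-diagonal entry at $(j,b)$; so each product collapses to a short explicit sum of terms $[D_\nu+E_{ia}+E_{jb}]$, $[D_\nu+E_{ib}]$, or $[D_\nu+E_{ja}+E_{ib}]$, with coefficients the $v$-powers supplied by the functions $f_{k,l}$ of Lemma~\ref{lem-y}.

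It then remains to check that $v^{-\delta_{ij}}\overline t_{ia}\overline t_{jb}-v^{-\delta_{ab}}\overline t_{jb}\overline t_{ia}$ telescopes, basis element by basis element, to $(v^{-1}-v)(\delta_{b<a}-\delta_{i<j})\overline t_{ja}\overline t_{ib}$; the two signs $\delta_{b<a}$ and $\delta_{i<j}$ emerge according to whether the arcs $(i,a)$ and $(j,b)$ are nested, crossing, or disjoint, which is also what decides which of the three shapes of $[A']$ actually occurs. I expect this bookkeeping to be the main obstacle: one must track, on each chamber of $\{i,a,j,b\}$, exactly which chains survive on an almost-diagonal matrix and the precise exponents of $v$ contributed both by the $f_{k,l}$ and by the normalization $[A]=v^{-(d(A)-r(A))}e_A$, so that the two sides agree identically rather than merely up to a monomial; once the contribution of each shape has been isolated the identity follows from an elementary cancellation of $v$-powers. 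The remaining low-codimension cases — one or both of $\overline t_{ia},\overline t_{jb}$ diagonal, or one of $\overline t_{ja},\overline t_{ib}$ vanishing because its row index exceeds its column index — are treated the same way but are far shorter, most of them coming down to the centrality of the $[D_\lambda]$ or to $\overline t_{ab}=0$ for $a>b$.
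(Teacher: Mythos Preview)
Your approach is correct and essentially the same as the paper's: for (R1) both you and the authors compute the three products via Lemma~\ref{lem-y}(a) and verify the identity by a case analysis on the relative order of $i,j,a,b$, with the same chamber decomposition driving which shapes $[D_\nu+E_{ia}+E_{jb}]$, $[D_\nu+E_{ib}+E_{ja}]$, $[D_\nu+E_{ib}]$ actually appear. For (R2)--(R4) the paper simply cites \cite{MZZ17}, whereas you supply the short explicit arguments from the orthogonality of the $[D_\lambda]$ and the row/column compatibility constraint; these are correct and are exactly what the cited reference does.
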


\begin{proof}
We show the identity R1.

We will show it in the following several cases. By the first part of  lemma \ref{lem-y}, the following identities can be obtained by directly computing.

First case.  $1 \leq i < j < b < a \leq n$,
$$
\begin{array}{ccc}
  \overline{t}_{ia}\overline{t}_{jb}& = & (v - v^{-1})^{2}\sum\limits_{ \lambda}v^{\lambda_i}[D_{\lambda} + E_{ia}] \sum\limits_{ \lambda'}v^{\lambda_j}[D_{\lambda} + E_{jb}]\\[.15 in]
   & = & (v - v^{-1})^{2}\sum\limits_{\lambda}v^{- \sum_{k = i + 1}^{a - 1}\lambda_k}e_{D_{\lambda} + E_{ia}}\sum\limits_{\lambda'}v^{- \sum_{k = j + 1}^{b - 1}\lambda_k}e_{D_{\lambda'} + E_{ia}}  \\ [.15 in]
   & = &(v - v^{-1})^{2} \sum\limits_{\substack{ \lambda =(\lambda_1, \lambda_2,\cdots, \lambda_n)\\ s.t \sum\limits_{k =1}^n \lambda_{k}= d - 2}} v^{- \sum_{k = i + 1}^{a - 1}\lambda_{k} - \sum_{k = j + 1}^{b - 1}\lambda_{k} - 1}e_{D_{\lambda} + E_{ia} + E_{jb}}. \\
\end{array}
$$
$$
\begin{array}{ccc}
  \overline{t}_{jb}\overline{t}_{ia}& = & (v - v^{-1})^{2}\sum\limits_{ \lambda'}v^{\lambda_j'}[D_{\lambda} + E_{jb}]\sum\limits_{ \lambda}v^{\lambda_i}[D_{\lambda} + E_{ia}] \\ [.15 in]
   & = & (v - v^{-1})^{2}\sum\limits_{\lambda'}v^{- \sum_{k = j + 1}^{b - 1}\lambda_k}e_{D_{\lambda'} + E_{ia}}\sum\limits_{\lambda}v^{- \sum_{k = i + 1}^{a - 1}\lambda_k}e_{D_{\lambda} + E_{ia}}  \\ [.15 in]
   & = &(v - v^{-1})^{2} \sum\limits_{\substack{ \lambda =(\lambda_1, \lambda_2,\cdots, \lambda_n) \\s.t \sum_{k =1}^n \lambda_{k}= d - 2}}v^{- \sum_{k = i + 1}^{a - 1}\lambda_{k} - \sum_{k = j + 1}^{b - 1}\lambda_{k} - 1}e_{D_{\lambda} + E_{ia} + E_{jb}}. \\
\end{array}
$$
Hence
$$ \overline{t}_{ia}\overline{t}_{jb} -  \overline{t}_{jb}\overline{t}_{ia} = 0.$$

Second case.  $i < j \ \text{and} \ a \leq b$,

$ \overline{t}_{ia}\overline{t}_{jb} \\
 =  (v^{-1} - v)^{2}\sum\limits_{ \lambda}v^{\lambda_i}[D_{\lambda} + E_{ia}] \sum\limits_{ \lambda'}v^{\lambda_j}[D_{\lambda} + E_{jb}]\\ [.15 in]
  = (v^{-1} - v)^{2}\sum\limits_{\lambda}v^{- \sum_{k = i + 1}^{a - 1}\lambda_k}e_{D_{\lambda} + E_{ia}}\sum\limits_{\lambda'}v^{- \sum_{k = j + 1}^{b - 1}\lambda_k}e_{D_{\lambda'} + E_{ia}}  \\ [.15 in]
=\left\{\begin{array}{ll}
(v^{-1} - v)^{2} \sum\limits_{ \substack{\lambda =(\lambda_1, \lambda_2,\cdots, \lambda_n) \\s.t \sum_{k =1}^n \lambda_{k}= d - 2}} v^{- \sum_{k = i + 1}^{a - 1}\lambda_{k} - \sum_{k = j + 1}^{b - 1}\lambda_{k} + 1}e_{D_{\lambda} + E_{ia} + E_{jb}}
& \text{if} \ i < j < a = b; \\[.4in]
(v^{-1} - v)^{2} (\sum\limits_{ \substack{\lambda =(\lambda_1, \lambda_2,\cdots, \lambda_n) \\s.t \sum_{k =1}^n \lambda_{k}= d - 2}} v^{- \sum_{k = i + 1}^{a - 1}\lambda_{k} - \sum_{k = j + 1}^{b - 1}\lambda_{k}}e_{D_{\lambda} + E_{ia} + E_{jb}} \\ +  \sum\limits_{ \substack{\lambda =(\lambda_1, \lambda_2,\cdots, \lambda_n) \\s.t \sum_{k =1}^n \lambda_{k}= d - 2}} v^{- \sum_{k = i + 1}^{a - 1}\lambda_{k} - \sum_{k = j + 1}^{b - 1}\lambda_{k} - 2}(v^{2} - 1)e_{D_{\lambda} + E_{ib} + E_{ja}})
& \text{if} \ i < j < a < b ; \\[.4in]
(v^{-1} - v)^{2} (\sum\limits_{ \substack{\lambda =(\lambda_1, \lambda_2,\cdots, \lambda_n) \\s.t \sum_{k =1}^n \lambda_{k}= d - 2}} v^{- \sum_{k = i + 1}^{a - 1}\lambda_{k} - \sum_{k = j + 1}^{b - 1}\lambda_{k}}e_{D_{\lambda} + E_{ia} + E_{jb}} \\ +  \sum\limits_{ \substack{\lambda =(\lambda_1, \lambda_2,\cdots, \lambda_n) \\s.t \sum_{k =1}^n \lambda_{k}= d - 1}} v^{- \sum_{k = i + 1}^{a - 1}\lambda_{k} - \sum_{k = j + 1}^{b - 1}\lambda_{k}}e_{D_{\lambda} + E_{ib}})
& \text{if} \ i < j = a < b; \\[.4in]
(v^{-1} - v)^{2} \sum\limits_{ \substack{\lambda =(\lambda_1, \lambda_2,\cdots, \lambda_n) \\s.t \sum_{k =1}^n \lambda_{k}= d - 2}} v^{- \sum_{k = i + 1}^{a - 1}\lambda_{k} - \sum_{k = j + 1}^{b - 1}\lambda_{k} }e_{D_{\lambda} + E_{ia} + E_{jb}}
& \text{if}\  i < a < j < b.
 \end{array}   \right.
$

$  \overline{t}_{jb}\overline{t}_{ia} \\ [.15 in]
 =  (v^{-1} - v)^{2}\sum_{ \lambda'}v^{\lambda_j}[D_{\lambda} + E_{jb}]\sum_{ \lambda}v^{\lambda_i}[D_{\lambda} + E_{ia}] \\ [.15 in]
    =  (v^{-1} - v)^{2}\sum_{\lambda'}v^{- \sum_{k = j + 1}^{b - 1}\lambda_k}e_{D_{\lambda'} + E_{ia}}\sum_{\lambda}v^{- \sum_{k = i + 1}^{a - 1}\lambda_k}e_{D_{\lambda} + E_{ia}}  \\ [.15 in]
    =(v^{-1} - v)^{2} \sum\limits_{ \substack{\lambda =(\lambda_1, \lambda_2,\cdots, \lambda_n) \\s.t \sum_{k =1}^n \lambda_{k}= d - 2}} v^{- \sum_{k = i + 1}^{a - 1}\lambda_{k} - \sum_{k = j + 1}^{b - 1}\lambda_{k} }e_{D_{\lambda} + E_{ia} + E_{jb}}.
$
\\ [.2 in]
$ \overline{t}_{ja}\overline{t}_{ib}
=\left\{\begin{array}{ll}
(v^{-1} - v)^{2} \sum\limits_{ \substack{\lambda =(\lambda_1, \lambda_2,\cdots, \lambda_n) \\s.t \sum_{k =1}^n \lambda_{k}= d - 2}} v^{- \sum_{k = i + 1}^{a - 1}\lambda_{k} - \sum_{k = j + 1}^{b - 1}\lambda_{k}}e_{D_{\lambda} + E_{ib} + E_{ja}}
& \text{if} \ i < j < a = b; \\[.15in]
(v^{-1} - v)^{2} \sum_{ \substack{\lambda =(\lambda_1, \lambda_2,\cdots, \lambda_n)\\ s.t \sum_{k =1}^n \lambda_{k}= d - 2}} v^{- \sum_{k = i + 1}^{a - 1}\lambda_{k} - \sum_{k = j + 1}^{b - 1}\lambda_{k} - 1}e_{D_{\lambda} + E_{ib} + E_{ja}}
& \text{if} \ i < j < a < b ; \\[.15in]
-(v^{-1} - v)\sum\limits_{ \substack{\lambda =(\lambda_1, \lambda_2,\cdots, \lambda_n) \\s.t \sum_{k =1}^n \lambda_{k}= d - 1}} v^{- \sum_{k = i + 1}^{a - 1}\lambda_{k} - \sum_{k = j + 1}^{b - 1}\lambda_{k}}e_{D_{\lambda} + E_{ib}}
& \text{if} \ i < j = a < b; \\[.15in]
0
& \text{if}\  i < a < j < b.
 \end{array}   \right.
$

Thus,
$$ \overline{t}_{ia}\overline{t}_{jb} - v^{-\delta_{ab}} \overline{t}_{jb}\overline{t}_{ia} = -(v^{-1}- v)\overline{t}_{ja}\overline{t}_{ib}.$$

Third case. $b < a\ \text{and} \  i \geq j$,

$ \overline{t}_{ia}\overline{t}_{jb} \\ [.15 in]
  = (v^{-1} - v)^{2}\sum_{\lambda}v^{- \sum_{k = i + 1}^{a - 1}\lambda_k}e_{D_{\lambda} + E_{ia}}\sum_{\lambda'}v^{- \sum_{k = j + 1}^{b - 1}\lambda_k}e_{D_{\lambda'} + E_{ia}}  \\ [.15 in]
  = (v^{-1} - v)^{2} \sum\limits_{ \substack{\lambda =(\lambda_1, \lambda_2,\cdots, \lambda_n) \\s.t \sum_{k =1}^n \lambda_{k}= d - 2}} v^{- \sum_{k = i + 1}^{a - 1}\lambda_{k} - \sum_{k = j + 1}^{b - 1}\lambda_{k} }e_{D_{\lambda} + E_{ia} + E_{jb}}.
$

$  \overline{t}_{jb}\overline{t}_{ia}
 =  (v^{-1} - v)^{2}\sum_{ \lambda'}v^{\lambda_j}[D_{\lambda} + E_{jb}]\sum_{ \lambda}v^{\lambda_i}[D_{\lambda} + E_{ia}] \\ [.15 in]
    =  (v^{-1} - v)^{2}\sum_{\lambda'}v^{- \sum_{k = j + 1}^{b - 1}\lambda_k}e_{D_{\lambda'} + E_{jb}}\sum_{\lambda}v^{- \sum_{k = i + 1}^{a - 1}\lambda_k}e_{D_{\lambda} + E_{ia}}  \\ [.15 in]
=\left\{\begin{array}{ll}
(v^{-1} - v)^{2} \sum\limits_{ \substack{\lambda =(\lambda_1, \lambda_2,\cdots, \lambda_n) \\s.t \sum_{k =1}^n \lambda_{k}= d - 2}} v^{- \sum_{k = i + 1}^{a - 1}\lambda_{k} - \sum_{k = j + 1}^{b - 1}\lambda_{k} }e_{D_{\lambda} + E_{jb} + E_{ia}}
& \text{if} \ j < b < i < a; \\[.4in]
(v^{-1} - v)^{2} (\sum\limits_{ \substack{\lambda =(\lambda_1, \lambda_2,\cdots, \lambda_n) \\s.t \sum_{k =1}^n \lambda_{k}= d - 2}} v^{- \sum_{k = i + 1}^{a - 1}\lambda_{k} - \sum_{k = j + 1}^{b - 1}\lambda_{k}}e_{D_{\lambda} + E_{ia} + E_{jb}} \\ +  \sum\limits_{ \substack{\lambda =(\lambda_1, \lambda_2,\cdots, \lambda_n) \\s.t \sum_{k =1}^n \lambda_{k}= d - 1}} v^{- \sum_{k = i + 1}^{a - 1}\lambda_{k} - \sum_{k = j + 1}^{b - 1}\lambda_{k}}e_{D_{\lambda} + E_{ja}})
& \text{if} \ j < b = i < a ; \\[.4in]
(v^{-1} - v)^{2} (\sum\limits_{ \substack{\lambda =(\lambda_1, \lambda_2,\cdots, \lambda_n) \\s.t \sum_{k =1}^n \lambda_{k}= d - 2}} v^{- \sum_{k = i + 1}^{a - 1}\lambda_{k} - \sum_{k = j + 1}^{b - 1}\lambda_{k}}e_{D_{\lambda} + E_{ia} + E_{jb}} \\ +  \sum\limits_{ \substack{\lambda =(\lambda_1, \lambda_2,\cdots, \lambda_n) \\s.t \sum_{k =1}^n \lambda_{k}= d - 2}} v^{- \sum_{k = i + 1}^{a - 1}\lambda_{k} - \sum_{k = j + 1}^{b - 1}\lambda_{k} - 2}(v^{2} - 1)e_{D_{\lambda} + E_{ib} + E_{ja}})
& \text{if} \ j < i < b < a; \\[.4in]
(v^{-1} - v)^{2} \sum\limits_{ \substack{\lambda =(\lambda_1, \lambda_2,\cdots, \lambda_n) \\s.t \sum_{k =1}^n \lambda_{k}= d - 2}} v^{- \sum_{k = i + 1}^{a - 1}\lambda_{k} - \sum_{k = j + 1}^{b - 1}\lambda_{k} + 1}e_{D_{\lambda} + E_{jb} + E_{ia}}
& \text{if}\  j = i < b < a.
 \end{array}   \right.
$

$  \overline{t}_{ja}\overline{t}_{ib}
=\left\{\begin{array}{ll}
0
& \text{if} \ j < b < i < a; \\[.15in]
-(v^{-1} - v)\sum\limits_{ \substack{\lambda =(\lambda_1, \lambda_2,\cdots, \lambda_n) \\s.t \sum_{k =1}^n \lambda_{k}= d - 1}} v^{- \sum_{k = i + 1}^{a - 1}\lambda_{k} - \sum_{k = j + 1}^{b - 1}\lambda_{k}}e_{D_{\lambda} + E_{ja}}
& \text{if} \ j < b = i < a ; \\[.15in]
(v^{-1} - v)^{2}\sum\limits_{ \substack{\lambda =(\lambda_1, \lambda_2,\cdots, \lambda_n) \\s.t \sum_{k =1}^n \lambda_{k}= d - 2}} v^{- \sum_{k = i + 1}^{a - 1}\lambda_{k} - \sum_{k = j + 1}^{b - 1}\lambda_{k} - 1}e_{D_{\lambda} + E_{ib} + E_{ja}}
& \text{if} \ j < i < b < a; \\[.15in]
(v^{-1} - v)^{2} \sum\limits_{ \substack{\lambda =(\lambda_1, \lambda_2,\cdots, \lambda_n) \\s.t \sum_{k =1}^n \lambda_{k}= d - 2}} v^{- \sum_{k = i + 1}^{a - 1}\lambda_{k} - \sum_{k = j + 1}^{b - 1}\lambda_{k} }e_{D_{\lambda} + E_{ja} + E_{ib}}
& \text{if}\  j = i < b < a.
 \end{array}   \right.
$

Therefore,
$$v^{-\delta_{ij}} \overline{t}_{ia}\overline{t}_{jb} -  \overline{t}_{jb}\overline{t}_{ia} = (v^{-1}- v)\overline{t}_{ja}\overline{t}_{ib}.$$

Forth case. $j \leq i < a \leq b$.  it is easy to know that when $j = i < a =b$, the identity R1 is equal. From the first case, when$j < i  < a < b$, the identity R1 is also equal. From the second and the third case. When $j < i  < a = b$ and $j = i  < a < b$, the identity is right.

Thus, the identity R1 is equal.

 The other identities can be shown similarly to Proposition 4.1.1 in \cite{MZZ17}.
\end{proof}

Recall the partial order $``\leq"$ on $\Theta_d$  by $A\leq B$ if $\mcal O_{A} \subset \overline{\mcal O}_{B}$\cite{BLM90}.
For any $A=(a_{ij})$ and $B=(b_{ij})$ in $\Theta_{\mbf d}$,  we say that $A \preceq B$ if and only if
the following two conditions hold.
\begin{align}\label{partial-order}
 \sum_{r\leq i,  s\geq j} a_{rs}  & \leq \sum_{r\leq i,  s\geq j} b_{rs},  \quad \forall i<j. \\
  \sum_{r\geq i,  s\leq j} a_{rs}  & \leq \sum_{r\geq i,  s\leq j} b_{rs},  \quad \forall i>j.
\end{align}
The  relation $``\preceq" $ defines a second partial order on  $\Theta_d$.

\begin{thm}
For any $A=(a_{ij})\in \Theta_{\mbf d}$.  The following identity holds in $\mcal S$
$$\prod_{1\leq j  < i \leq n} e_{D_{ij} + a_{ij}E_{ij}} *\prod_{1\leq i < j \leq n} e_{D_{ij} + a_{ij}E_{ij}}   =  \chi_A e_A + {\rm lower\ terms}, $$
where $\chi_A \in \mathcal A \setminus\{0\}$.
The factors in the first product are taken in the following order: $(i, ~j)$ comes before $(i', ~ j')$ if either $j < j'$ or $j = j',~ i< i'$.
The factors in the second product are taken in the following order: $(i,~j)$ comes before $(i',~j')$ if either $j > j'$ or $j = j',~ i> i'$.
The matrices $D_{i, j}$ are diagonal with entries in $\mathbb{N}$,which are uniquely determined.
\end{thm}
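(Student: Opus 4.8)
The plan is to compute the product factor by factor, using the multiplication formulas of Lemma~\ref{lem-y} and the Proposition above, and at each step keeping track only of the $\preceq$-leading term.

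First I would pin down the diagonal matrices $D_{ij}$. Writing the left-hand side as $F_1 * F_2 * \cdots * F_N$ with the factors listed in the prescribed order, the requirement that consecutive factors be composable, i.e.\ $\mathrm{co}(F_k) = \mathrm{ro}(F_{k+1})$ for all $k$, together with $\mathrm{ro}(F_1) = \mathrm{ro}(A)$, forces every $D_{ij}$. Since $\mathrm{ro}(A)_k = \sum_j a_{kj}$, one checks that $\mathrm{co}(F_N) = \mathrm{co}(A)$ then holds automatically and that each diagonal entry occurring in the recursion is a nonnegative integer (every such entry is visibly a sum of entries of $A$), so the $D_{ij} \in \mathrm{Mat}_{n\times n}(\mathbb N)$ exist and are unique. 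A single-entry instance of Lemma~\ref{lem-y} also writes $e_{D + aE_{ij}}$ as a nonzero scalar times $(e_{D' + E_{ij}})^{*a}$, which reduces the computation with a factor carrying an arbitrary entry $a_{ij}$ to the one-box multiplication formula.

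Next I would run the induction, peeling factors off on the right: $G_N = F_N$ is a single basis element, and $G_k = F_k * G_{k+1}$, so $G_1$ is the left-hand side. The point of the prescribed order is that, when $e_{D_{ij}+a_{ij}E_{ij}}$ is multiplied onto a partial product $\chi\, e_C + (\text{terms} \prec C)$, the row of $C$ out of which boxes have to be moved carries no off-diagonal entry deposited by an earlier factor; consequently the multiplication formula yields a unique $\preceq$-maximal summand, the one that moves the $a_{ij}$ boxes out of the diagonal entry in column $j$ and deposits them in the slot $(i,j)$, with coefficient the corresponding product of the explicit factors $f_l$, $f_{k,l}$ --- each plainly a nonzero element of $\mathcal A$ (a power of $v$ times an expression $\tfrac{v^{2a}-v^{2b}}{v^2-1}$ with $a>b$). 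The maximality claim is the pair of short computations with the partial sums in \eqref{partial-order}: the clean deposit does not decrease any of those sums and strictly increases at least one, relative to every other choice allowed by the multiplication formula. Using in addition that $e_{D_{ij}+a_{ij}E_{ij}} * (\text{terms} \prec C)$ only contributes terms strictly below the leading term of $e_{D_{ij}+a_{ij}E_{ij}} * e_C$ (the usual monotonicity of the convolution for ``$\preceq$'', itself read off from the multiplication formulas), one concludes by induction that the leading term of the full product is $e_A$ --- the deposits accumulate to the off-diagonal part of $A$, and the recursion fixing the $D_{ij}$ makes the diagonal come out right --- with $\chi_A$ equal to the product of all the $f$'s met along the way, a nonzero element of $\mathcal A$.

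The step I expect to be the main obstacle is precisely this leading-term bookkeeping: verifying that for the prescribed orders the ``clean deposit'' summand is indeed available at every stage --- the source rows really are free of earlier deposits, taking into account that the factors of the first product act on top of the matrix produced by the second product, whose off-diagonal part is already nonzero --- and that it is the unique $\preceq$-maximal summand there. This is where the explicit shape of the two orderings enters; granting these finitely many partial-sum comparisons, the induction goes through and $\chi_A \in \mathcal A \setminus \{0\}$.
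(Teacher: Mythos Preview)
Your proposal is correct and follows the same underlying approach as the paper --- iterated application of Lemma~\ref{lem-y} while tracking the $\preceq$-leading term --- but your outline of the general inductive argument is considerably more detailed than the paper's own proof, which simply writes out the six factors $A_1,\dots,A_6$ explicitly in the case $n=3$, invokes Lemma~\ref{lem-y} to assert $e_{A_6}*\cdots*e_{A_1}=\chi_A e_A+\text{lower terms}$, and then says the general case is similar. In particular, the paper does not spell out the determination of the $D_{ij}$, the monotonicity of convolution for $\preceq$, or the ``clean deposit'' verification that you identify as the crux; your version is in effect what the paper's hand-wave is pointing at.
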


\begin{proof}
Assume n = 3 so that
$$
A=\left(
  \begin{array}{ccc}
    a_{11} & a_{12} & a_{13} \\
    a_{21} & a_{22} & a_{23} \\
    a_{31} & a_{32} & a_{33} \\
  \end{array}
\right)
$$
We define
$$
A_1=\left(
  \begin{array}{ccc}
    a_{11}+a_{21}+a_{31} & a_{12} & 0 \\
    0 & a_{22}+a_{32} & 0 \\
    0 & 0 & a_{13}+a_{23}+a_{33} \\
  \end{array}
\right)
$$

$$
A_2=\left(
  \begin{array}{ccc}
    a_{11}+a_{21}+a_{31}+a_{12} & 0 & a_{13} \\
    0 & a_{22}+a_{32} & 0 \\
    0 & 0 & a_{23}+a_{33} \\
  \end{array}
\right)
$$

$$
A_3=\left(
  \begin{array}{ccc}
    a_{11}+a_{21}+a_{31}+a_{12}+a_{13} & 0 & 0\\
    0 & a_{22}+a_{32} & a_{23} \\
    0 & 0 & a_{33} \\
  \end{array}
\right)
$$

$$
A_4=\left(
  \begin{array}{ccc}
    a_{11}+a_{21}+a_{31}+a_{12}+a_{13} & 0 & 0\\
    0 & a_{22}+a_{23} & 0 \\
    0 & a_{32} & a_{33} \\
  \end{array}
\right)
$$

$$
A_5=\left(
  \begin{array}{ccc}
    a_{11}+a_{21}+a_{12}+a_{13} & 0 & 0\\
    0 & a_{22}+a_{23} & 0 \\
    a_{31} & 0 &a_{32}+a_{33} \\
  \end{array}
\right)
$$

$$
A_6=\left(
  \begin{array}{ccc}
    a_{11}+a_{12}+a_{13} & 0 & 0\\
    a_{21} & a_{22}+a_{23} & 0 \\
     0 & 0 &a_{31}+a_{32}+a_{33} \\
  \end{array}
\right)
$$

By the Lemma \ref{lem-y}, we have

$$e_{A_{6}}*e_{A_{5}}* \cdots *e_{A_{1}}  = \chi_{A}e_{A} + \mbox{lower \ terms}.$$

Similarly,  we can prove the proposition for the general case.

\end{proof}

\subsection{Stabilization}

Let $I$ be the identity matrix.  We set
$
{}_pA=A + pI
$.
Let $\widetilde{\Theta}$  be the set of all $n\times n$ matrix with integer entries such that the entries off diagonal are $\geq 0$.

Let
$$\mcal K= \mbox{span}_{\mcal A} \{ [A]~ |~ A\in \wt{\Theta} \},
$$
where the notation $[A]$ is a formal  symbol.
Let $v'$ be a independent indeterminates,  and  $\mfk R$  be the ring $\mathbb{Q}(v)[v']$.

From \cite{BLM90}, we have known the following results.
\begin{prop}\label{prop1}
Suppose that $A_1, ~ A_2,~ \cdots, ~ A_r \ (r\geq 2)$ are matrices in $\wt{\Theta}$
such that ${\rm co}(A_i)={\rm ro}(A_{i+1})$  for $1\leq i \leq r-1$.
There exist $Z_1, ~ \cdots,~  Z_m\in \wt{\Theta}$,  $G_j(v, v')\in \mfk R$ and $p_0\in \mbb N$ such that in $\mcal S_d$ for some $d$,   we have
$$[{}_p A_1] * [{}_pA_2] * \cdots *[{}_p A_r]=\sum_{j=1}^mG_j(v, v^{-p})[{}_p Z_j], \quad
\forall  p\geq p_0. $$
\end{prop}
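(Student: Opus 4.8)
The plan is to adapt the stabilization argument of Beilinson--Lusztig--MacPherson, reducing everything to a computation with the explicit multiplication formula of Lemma \ref{lem-y}. First I would induct on $r$. For $r\geq 3$ write $[{}_pA_1]*\cdots*[{}_pA_r]=\bigl([{}_pA_1]*\cdots*[{}_pA_{r-1}]\bigr)*[{}_pA_r]$. By the inductive hypothesis there are $Y_1,\dots,Y_s\in\wt\Theta$, $H_i\in\mfk R$ and $p_1$ with $[{}_pA_1]*\cdots*[{}_pA_{r-1}]=\sum_i H_i(v,v^{-p})[{}_pY_i]$ for all $p\geq p_1$; since $\co(Y_i)=\co(A_{r-1})=\ro(A_r)$ we have $\co({}_pY_i)=\ro({}_pA_r)$, so each $[{}_pY_i]*[{}_pA_r]$ is a genuine two-term product, to which the $r=2$ case applies: $[{}_pY_i]*[{}_pA_r]=\sum_k H_{ik}'(v,v^{-p})[{}_pW_{ik}]$ for $p$ large. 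As $\mfk R$ is closed under products, $\sum_i H_iH_{ik}'\in\mfk R$; collecting the finitely many $W_{ik}$ and taking $p_0$ to be the largest threshold completes the induction, so it remains to treat $r=2$.

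For $r=2$ I would reduce to the case that $A_1=D+tE_{i_0,i_1}$ with $D$ diagonal, $t\geq 1$ and $i_0\neq i_1$ (an \emph{almost-diagonal} matrix). The Theorem above expresses $e_{A_1}$, modulo an $\mcal A$-combination of $e_{A'}$ with $A'$ strictly below $A_1$ for $\preceq$ and with invertible leading coefficient $\chi_{A_1}\in\mcal A$, as a product of almost-diagonal $e_B$'s; since the $e_A$ form an $\mcal A$-basis of $\mcal S$, a downward induction on the finite poset $(\Theta_d,\preceq)$ rewrites $e_{A_1}$, hence $[{}_pA_1]$ (the shift by $pI$ does not touch off-diagonal entries, so almost-diagonality is stable in $p$), as an $\mcal A$-linear combination of products of almost-diagonal $[{}_pB]$'s. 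Together with the reduction just made this brings us to $[{}_pB]*[{}_pA]$ with $B$ almost-diagonal; inducting additionally on $t$, using that $[D+tE_{i_0,i_1}]$ equals, up to an invertible scalar and lower terms, $[D'+E_{i_0,i_1}]*[D''+(t-1)E_{i_0,i_1}]$, we may assume $t=1$, i.e.\ $B-E_{i_0,i_1}$ is diagonal.

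It remains to prove stabilization for $[{}_pB]*[{}_pA]$ with $B-E_{i_0,i_1}$ diagonal, and here I would apply Lemma \ref{lem-y} — part (a) if $i_0<i_1$, part (b) if $i_0>i_1$ — with $A$ replaced by ${}_pA=A+pI$. Two points need checking. First, the matrices $A+\sum_l(E_{j_{l-1},p_l}-E_{j_l,p_l})$ occurring in the formula differ from $A$ only by a bounded alteration of finitely many off-diagonal entries, so with $A$ replaced by ${}_pA$ they are precisely the matrices ${}_pZ_1,\dots,{}_pZ_m$ for a fixed finite list $Z_1,\dots,Z_m\in\wt\Theta$, valid once $p$ is large enough that all diagonal entries of every $Z_j$ are $\geq 0$. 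Second, the coefficients $f_{(\mathbf j,\mathbf p)}$ are products of factors assembled from $v$-powers of sums $\sum_{j\geq p_l}a_{k,j}$ (resp.\ $\sum_{j\leq p_l}a_{k,j}$); replacing $A$ by ${}_pA$ adds $p$ to such a sum exactly when $a_{kk}$ falls in the range of summation and $0$ otherwise, so each factor becomes a monomial in $v$ times a power $v^{cp}$ with $c\in\mbb Z$. Passing to the normalized basis via $[\,\cdot\,]=v^{-(d(\cdot)-r(\cdot))}e_{\cdot}$ and computing $d({}_pA)-r({}_pA)=\sum_{i\geq k,\ j<l}({}_pa)_{ij}({}_pa)_{kl}$ (and likewise for each $Z_j$), one finds that the normalization absorbs all the growing contributions, leaving every structure constant in the form $G_j(v,v^{-p})$ for a fixed $G_j\in\mathbb{Q}(v)[v']=\mfk R$, as required.

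The one genuinely delicate step is the last: verifying that, after the $[\,\cdot\,]$-normalization, the exponent of $v^p$ in every structure constant is $\leq 0$, so that it becomes a \emph{polynomial} (not merely a Laurent polynomial) in $v'=v^{-p}$. This is an identity among the quadratic-in-$p$ quantities $d({}_pA)-r({}_pA)$, $d({}_pZ_j)-r({}_pZ_j)$ and the explicit exponents of Lemma \ref{lem-y}; once it is in hand, the remaining ingredients — the two inductions, the uniform finiteness of the support, and closure of $\mfk R$ under multiplication — are all formal. This is exactly the stabilization mechanism of \cite{BLM90}, to which I would refer for the bookkeeping details.
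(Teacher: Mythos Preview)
The paper offers no proof of this proposition; it is quoted from \cite{BLM90} without argument, so your sketch already goes beyond what the paper provides. Your outline is the BLM stabilization argument, and the identification of Lemma~\ref{lem-y} as the computational core --- together with the observation that replacing $A$ by ${}_pA$ shifts each exponent occurring in $f_{(\mathbf j,\mathbf p)}$ by an integer multiple of $p$ --- is exactly the right mechanism.

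One organizational point deserves care, however, because as written your two reductions feed into each other. Your induction on $r$ groups from the left, $\bigl([{}_pA_1]*\cdots*[{}_pA_{r-1}]\bigr)*[{}_pA_r]$, and so lands on two-factor products $[{}_pY_i]*[{}_pA_r]$ with $Y_i$ \emph{arbitrary}; but your treatment of the two-factor case rewrites $[{}_pA_1]$ as a combination of $s$-fold products of almost-diagonal $[{}_pB]$'s, so that $[{}_pA_1]*[{}_pA_2]$ becomes a sum of $(s{+}1)$-factor products, and collapsing those via the $r$-induction again requires the two-factor case with arbitrary first factor --- a circularity. The standard repair is to group from the right, $[{}_pA_1]*\bigl([{}_pA_2]*\cdots*[{}_pA_r]\bigr)$, so that after the inductive hypothesis one is left with $[{}_pA_1]*[{}_pY_i]$, the \emph{first} factor untouched. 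Then it suffices to prove the $r=2$ case only when $A_1-E_{i_0,i_1}$ is diagonal, which is precisely Lemma~\ref{lem-y}; this yields stabilization for all products whose initial factors are almost-diagonal, and the downward partial-order induction handles the general first factor afterwards (one must also check there that the leading coefficient $\chi_{{}_pA_1}$ is a unit in $\mfk R$, so that dividing by it keeps the coefficients polynomial in $v'=v^{-p}$). Since you explicitly defer such bookkeeping to \cite{BLM90}, and the paper itself gives nothing beyond that citation, your proposal is sound once the grouping is corrected.
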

By specialization $v'$ at $v'=1$,  there is a unique associative $\mcal A$-algebra structure on $\mcal K$ without unit,   where
 the product is given by
 $$[A_1] \cdot [\A_2]\cdot \dots \cdot  [\A_r] =\sum_{j=1}^m G_j(v, 1)[Z_j]$$
 if $A_1, \cdots,  A_r$ are as in Proposition \ref{prop1}.

Let $\hat{\mcal K}$ be the $\mbb Q(v)$-vector space of all formal sum
$\sum\limits_{A\in \tilde{\Theta}}\xi_{A} [A]$ with $\xi_{A}\in \mbb Q(v)$ and  a locally finite property,  that is,
for any ${\mbf t}\in \mbb Z^n$,  the sets $\{A\in \tilde{\Theta}~|~{\rm ro}(A)={\mbf t}, ~ \xi_{A} \neq 0\}$
and
$\{A\in \widetilde{\Theta}~ |~ {\rm co}(A)={\mbf t},~  \xi_{A} \neq 0\}$ are finite.
The space $\hat{\mcal K}$ becomes an  associative algebra over $\mbb Q(v)$
which equipped  with  the following multiplication:
$$
\sum_{A\in \wt{\Theta}} \xi_{A} [A]   \cdot \sum_{B \in \wt{\Theta}} \xi_{B} [B]
=\sum_{A,  B \in \wt{\Theta}} \xi_{A} \xi_{B} [A] \cdot [B],
$$
where the product $[A] \cdot [B]$ is taken  in $\mcal K$.

For any nonzero  matrix $A \in \wt{\Theta}$,
let $\hat{A}$ be the matrix obtained
by replacing diagonal entries of $A$ by zeroes.
We set
$
\Theta^{0}= \{ \hat{A} | A\in \wt{\Theta} \}.
$

For any $\hat{\A}$ in $\Theta^{0}$ and ${\mbf j}=(j_1, \cdots,  j_n)\in \mbb Z^n$,  we define
\begin{equation} \label{1wtA}
\hat{\A} ({\mbf j})=\sum_{\lambda}v^{\lambda_1j_1+\cdots+\lambda_{n}j_{n}}[\hat{\A} + D_{\lambda} \}],\quad
\end{equation}
where the  sum runs through all $\lambda=(\lambda_i)\in \mbb Z^n$ such that
$\hat{\A} + D_{\lambda} \in \wt{\Theta}$, where $D_{\lambda}$ is the diagonal matrices with diagonal entries $(\lambda_i). $

Define
$$\overline{t}_{ji} = 0, \ \text{if} \ 1 \leq i < j \leq n,$$
$$
\begin{array}{ccc}
  \overline{t}_{ij} & =&   -(v^{-1} - v)E_{ij}(\underline i ) \ (1\leq i < j \leq n),\\
  %t_{ji} & = & (v^{-1} - v)\sum_{ \lambda =(\lambda_1, \lambda_2,\cdots, \lambda_n) s.t \sum_{k =1}^n \lambda_{k}= d - 1}v^{ - \lambda_i}[D_{\lambda} + E_{ji}] \ (1 \leq i < j \leq n)\\
  \overline{t}_{ii} & =& 0(\underline i)\ (1 \leq i \leq n),\\
  %t_{ii} & = &  \sum_{ \lambda =(\lambda_1, \lambda_2,\cdots, \lambda_n) s.t \sum_{k =1}^n \lambda_{k}= d}v^{ - \lambda_i}[D_{\lambda}] \ (1 \leq i \leq n)
\end{array}
$$
where $\underline i \in \mbb N^n$ is the vector whose $i$-th entry is 1 and 0 elsewhere.

Let $\mcal U$ be the subalgebra of $\hat{\mcal K}$ generated by $ \overline{t}_{ij}, \overline{t}_{ii} $ for all $i, j\in [1, n]$ and $\mbf j\in \mbb Z^n$.

\begin{prop}
The generators $t_{ij},~\overline{t}_{ij}$ in $\mcal U$, for any $i,~j\in [1,~n]$, satisfy the following relations.
\begin{eqnarray*}
  v^{-\delta_{ij}} \overline{t}_{ia}\overline{t}_{jb} - v^{-\delta_{ab}} \overline{t}_{jb}\overline{t}_{ia} = (v^{-1}- v)(\delta_{b < a} - \delta_{i < j})\overline{t}_{ja}\overline{t}_{ib},\\
\end{eqnarray*}
\end{prop}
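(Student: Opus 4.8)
The relation to be proved is exactly relation (R1), and by the Proposition established earlier it already holds in every $q$-Schur algebra $\mcal S_d$ (for all $d\geq 1$), with coefficients that do not involve $d$. The plan is therefore to transport (R1) from the $\mcal S_d$'s up to the stabilization algebra $\mcal U\subset\hat{\mcal K}$; note the statement only involves the $\overline t_{ij}$ (and the diagonal $\overline t_{ii}$), so only those need to be treated. First I would record the precise link between the two families of generators: writing $\overline t_{ij}^{(d)}$ for the element of $\mcal S_d$ used in the earlier Proposition, one has $\overline t_{ij}^{(d)}=-(v^{-1}-v)\sum_{\sum_k\lambda_k=d-1}v^{\lambda_i}[D_\lambda+E_{ij}]$ and $\overline t_{ii}^{(d)}=\sum_{\sum_k\lambda_k=d}v^{\lambda_i}[D_\lambda]$, whereas in $\mcal U$ the same symbols denote $-(v^{-1}-v)E_{ij}(\underline i)=-(v^{-1}-v)\sum_\lambda v^{\lambda_i}[D_\lambda+E_{ij}]$ and $0(\underline i)=\sum_\lambda v^{\lambda_i}[D_\lambda]$, the sums now running over all $\lambda\in\mbb Z^n$ with $D_\lambda+E_{ij}\in\wt\Theta$. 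Thus each $\mcal U$-generator is the evident locally finite "limit" of its $\mcal S_d$-counterpart obtained by dropping the truncation $\sum_k\lambda_k=d-1$.

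The main computation would then mirror, verbatim, the case analysis of the earlier Proposition, but now inside $\hat{\mcal K}$. Expanding both sides of the asserted identity by the definitions above reduces everything to evaluating products $[D_\lambda+E_{ia}]\cdot[D_\mu+E_{jb}]$ (and products with a diagonal matrix) in $\hat{\mcal K}$; by the definition of the multiplication on $\hat{\mcal K}$ through $\mcal K$, together with Proposition \ref{prop1}, these are computed by the multiplication formula of Lemma \ref{lem-y}(a). One then runs the same four cases — $1\leq i<j<b<a\leq n$; $i<j$ with $a\leq b$; $b<a$ with $i\geq j$; and $j\leq i<a\leq b$ — and checks that the very same telescoping/cancellation of the $v$-power weights $v^{\lambda_i},v^{\mu_j}$ that proved (R1) in $\mcal S$ goes through unchanged. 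This works because the structure constants produced by Lemma \ref{lem-y} and the subsequent Proposition are polynomials in $v$ depending only on the entries of the matrices involved, never on $d$; hence no $d$-dependent (equivalently, $v'$-dependent) term is created, and the only genuinely new point is that the resulting formal sums $\sum_\lambda(\cdots)[D_\lambda+E_?]$ have the locally finite property and so are honest elements of $\hat{\mcal K}$. Conceptually this is why the argument succeeds: since (R1) holds in $\mcal S_{d_p}$ for all $p$, and the products of these particular generators stabilize with $v'$-independent coefficients, the coefficient of any fixed $[C]$ in either side of the identity in $\hat{\mcal K}$ equals the coefficient of $[{}_pC]$ in the corresponding side of (R1) in $\mcal S_{d_p}$ for $p\gg0$, so the two sides agree term by term; the identity then descends to the subalgebra $\mcal U$.

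The hard part is precisely the bookkeeping behind that last sentence: one must verify that passing from $\mcal S_d$ to $\hat{\mcal K}$ is not spoiled by the specialization $v'=1$, i.e. that for the matrices of shape $D_\lambda+E_{ij}$ that actually occur, all partial row sums $\sum_{j\geq p}a_{kj}$ entering Lemma \ref{lem-y} stay in the "stable range" (so that the structure constants are genuinely $v'$-free), and that the truncation $\sum_k\lambda_k=d-1$ present in $\mcal S_d$ but absent in $\mcal U$ does not alter the $[C]$-coefficients once $d$ is large. Since the relevant combinatorics is identical to the $\mcal S$-case already carried out, this is routine, but it is the step that has to be written out with care. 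With it in place, the identity $v^{-\delta_{ij}}\overline t_{ia}\overline t_{jb}-v^{-\delta_{ab}}\overline t_{jb}\overline t_{ia}=(v^{-1}-v)(\delta_{b<a}-\delta_{i<j})\overline t_{ja}\overline t_{ib}$ holds in $\hat{\mcal K}$, hence in $\mcal U$.
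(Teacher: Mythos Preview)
Your proposal is correct and follows essentially the same route as the paper: a direct case-by-case computation in $\hat{\mcal K}$ using Lemma~\ref{lem-y}(a) and Proposition~\ref{prop1}, running through the same four cases ($i<j<b<a$; $i<j$ with $a\le b$; $b<a$ with $i\ge j$; $j\le i<a\le b$) and obtaining the clean expressions $(E_{ia}+E_{jb})(\underline i+\underline j)$, etc. The only difference is emphasis: the paper simply performs the computations in $\hat{\mcal K}$ and records the outcomes, whereas you additionally articulate the conceptual reason the transport from $\mcal S_d$ succeeds (the structure constants arising here are $v'$-free, so specializing $v'=1$ changes nothing and the $\mcal S_d$-identity survives the limit). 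That extra justification is a welcome addition but does not change the underlying argument.
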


\begin{proof}
 The following identities can be obtained by directly computing by the first part of  Lemma \ref{lem-y} and Proposition \ref{prop1}.

First case.  $1 \leq i < j < b < a \leq n$,
$$
\begin{array}{ccc}
  \overline{t}_{ia}\overline{t}_{jb}& = & (v - v^{-1})^{2}E_{ia}(\underline i)E_{jb}(\underline j)\\ [.15 in]
  &=&(v - v^{-1})^{2}\sum\limits_{ \lambda}v^{\lambda_i}[D_{\lambda} + E_{ia}] \sum\limits_{ \lambda'}v^{\lambda_j'}[D_{\lambda} + E_{jb}]\\ [.15 in]
   & = &(v - v^{-1})^{2} \sum_{ \lambda} v^{\lambda_i + \lambda_j}[D_{\lambda} + E_{ia} + E_{jb}] \\ [.15 in]
   & = &(v - v^{-1})^{2}(E_{ia} + E_{jb})(\underline i + \underline j). \\
\end{array}
$$
$$
\begin{array}{ccc}
  \overline{t}_{jb}\overline{t}_{ia}& = & (v - v^{-1})^{2}E_{jb}(\underline j)E_{ia}(\underline i)\\ [.15 in]
  & = & (v - v^{-1})^{2}\sum_{ \lambda'}v^{\lambda_j'}[D_{\lambda} + E_{jb}]\sum_{ \lambda}v^{\lambda_i}[D_{\lambda} + E_{ia}] \\ [.15 in]
   & = &(v - v^{-1})^{2} \sum_{ \lambda} v^{\lambda_i + \lambda_j}[D_{\lambda} + E_{ia} + E_{jb}] \\ [.15 in]
   & = &(v - v^{-1})^{2}(E_{ia} + E_{jb})(\underline i + \underline j). \\ [.15 in]
\end{array}
$$
Hence
$$ \overline{t}_{ia}\overline{t}_{jb} -  \overline{t}_{jb}\overline{t}_{ia} = 0.$$

Second case.  $i < j \ \text{and} \  a \leq b$,

$ \overline{t}_{ia}\overline{t}_{jb} \\ [.15 in]
 =  (v^{-1} - v)^{2}\sum_{ \lambda}v^{\lambda_i}[D_{\lambda} + E_{ia}] \sum_{ \lambda'}v^{\lambda_j'}[D_{\lambda} + E_{jb}]\\ [.15 in]
=\left\{\begin{array}{ll}
(v^{-1} - v)^{2} v(E_{ia}+E_{jb})(\underline i + \underline j),
& \text{if} \ i < j < a = b; \\[.15in]
(v^{-1} - v)^{2} ( (E_{ia}+E_{jb})(\underline i + \underline j) +   (v - v^{-1})(E_{ib} + E_{ja})(\underline i + \underline j))
& \text{if} \ i < j < a < b ; \\[.15in]
(v^{-1} - v)^{2} ((E_{ia}+E_{jb})(\underline i + \underline j)  + (E_{ib})(\underline i + \underline j) )
& \text{if} \ i < j = a < b; \\[.15in]
(v^{-1} - v)^{2} (E_{ia}+E_{jb})(\underline i + \underline j).
& \text{if}\  i < a < j < b.
 \end{array}   \right.
$

$  \overline{t}_{jb}\overline{t}_{ia} \\ [.15 in]
 =  (v^{-1} - v)^{2}\sum_{ \lambda'}v^{\lambda_j'}[D_{\lambda} + E_{jb}]\sum_{ \lambda}v^{\lambda_i}[D_{\lambda} + E_{ia}] \\ [.15 in]
  =  (v^{-1} - v)^{2}(E_{ia}+E_{jb})(\underline i + \underline j).
$

$ \overline{t}_{ja}\overline{t}_{ib}
=\left\{\begin{array}{ll}
(v^{-1} - v)^{2}(E_{ia}+E_{jb})(\underline i + \underline j).
& \text{if} \ i < j < a = b; \\[.15in]
(v^{-1} - v)^{2}  (E_{ib} + E_{ja})(\underline i + \underline j)
& \text{if} \ i < j < a < b ; \\[.15in]
-(v^{-1} - v) E_{ib}(\underline i + \underline j)
& \text{if} \ i < j = a < b; \\[.15in]
0
& \text{if}\  i < a < j < b.
 \end{array}   \right.
$

Thus,
$$ \overline{t}_{ia}\overline{t}_{jb} - v^{-\delta_{ab}} \overline{t}_{jb}\overline{t}_{ia} = -(v^{-1}- v)\overline{t}_{ja}\overline{t}_{ib}.$$

Third case.  $b < a \ \text{and} \ i \geq j$,

$ \overline{t}_{ia}\overline{t}_{jb} \\ [.15 in]
   =  (v^{-1} - v)^{2}\sum_{ \lambda}v^{\lambda_i}[D_{\lambda} + E_{ia}] \sum_{ \lambda'}v^{\lambda_j'}[D_{\lambda} + E_{jb}]\\ [.15 in]
  = (v^{-1} - v)^{2} v^{\delta_{ij}}(E_{ia}+E_{jb})(\underline i + \underline j)
$

$  \overline{t}_{jb}\overline{t}_{ia} \\ [.15 in]
 =  (v^{-1} - v)^{2}\sum_{ \lambda'}v^{\lambda_j'}[D_{\lambda} + E_{jb}]\sum_{ \lambda}v^{\lambda_i}[D_{\lambda} + E_{ia}] \\ [.15 in]
=\left\{\begin{array}{ll}
(v^{-1} - v)^{2}(E_{ia}+E_{jb})(\underline i + \underline j),
& \text{if} \ j < b < i < a; \\[.15in]
(v^{-1} - v)^{2} ((E_{ia}+E_{jb})(\underline i + \underline j) +   E_{ja}(\underline i + \underline j))
& \text{if} \ j < b = i < a ; \\[.15in]
(v^{-1} - v)^{2} ( (E_{ia}+E_{jb})(\underline i + \underline j) +  (v - v^{-1})( E_{ib} + E_{ja})(\underline i + \underline j))
& \text{if} \ j < i < b < a; \\[.15in]
(v^{-1} - v)^{2}  v^2(E_{jb} + E_{ia})(\underline i + \underline j)
& \text{if}\  j = i < b < a.
 \end{array}   \right.
$

$  \overline{t}_{ja}\overline{t}_{ib} \\
=\left\{\begin{array}{ll}
0
& \text{if} \ j < b < i < a; \\[.15in]
-(v^{-1} - v) E_{ja}(\underline i + \underline j)
& \text{if} \ j < b = i < a ; \\[.15in]
(v^{-1} - v)^{2}(E_{ib} + E_{ja})(\underline i + \underline j)
& \text{if} \ j < i < b < a; \\[.15in]
(v^{-1} - v)^{2} v(E_{ja} + E_{ib})(\underline i + \underline j)
& \text{if}\  j = i < b < a.
 \end{array}   \right.
$

Therefore
$$v^{-\delta_{ij}} \overline{t}_{ia}\overline{t}_{jb} -  \overline{t}_{jb}\overline{t}_{ia} = (v^{-1}- v)\overline{t}_{ja}\overline{t}_{ib}.$$

Forth case. $j \leq i < a \leq b$.  it is easy to know that when $j = i < a =b$, the identity is equal. From the first case, when$j < i  < a < b$, the identity is also equal.From the second and the third case. When $j < i  < a = b$ and $j = i  < a < b$, the identity is right.
Thus, The proposition follows.

\end{proof}

The previous proposition shows that $\U$ satisfied the defining relations of the positive part of $U_v(gl_n)$ with respect to the RTT relations. In fact , $\U \cong U_v(gl_n)^{+}$. That is, we give the realization of the  $U_v(gl_n)^{+}$ with respect to the RTT relations.

\bigskip

\noindent{{\bf Acknowledgements:} This work is supported by NSFC 11571119 and NSFC 11475178.}

\end{document}